\documentclass{amsart}
\usepackage[dvipdfmx]{graphicx}
\usepackage[varg]{txfonts}
\usepackage{mathtools,amssymb,amsthm,enumerate,comment,url}
\usepackage[arrow,curve,matrix]{xy}
\usepackage{fullpage}
\theoremstyle{plain}

\newtheorem{thm}{Theorem}[section]
\newtheorem{lem}[thm]{Lemma}
\newtheorem{prop}[thm]{Proposition}

\theoremstyle{definition}

\newtheorem{defn}[thm]{Definition}
\newtheorem{rem}[thm]{Remark}
\newtheorem{ex}[thm]{Example}


\newcommand{\R}{\mathbb{R}}


\newcommand{\X}{\mathcal{X}}
\newcommand{\Y}{\mathcal{Y}}


\newcommand{\abs}[1]{\left\lvert {#1} \right\rvert}

\newcommand{\id}{\mathrm{id}}

\newcommand{\link}{\mathrm{link}}

\renewcommand{\O}{\mathrm{O}}
\newcommand{\pair}[2]{\langle #1 ,\, #2 \rangle}
\newcommand{\pr}{\mathrm{pr}}

\newcommand{\sgn}{\mathrm{sgn}}
\newcommand{\supp}{\mathrm{supp}}

\numberwithin{equation}{section}
\numberwithin{figure}{section}

\sloppy
\allowdisplaybreaks[1]


\title{Polyak-Viro type formula for the Milnor triple linking number of link diagrams with multiple-crossings}
\author{Yusaku Okuhara and Keiichi Sakai}
\address{Faculty of Mathematics, Shinshu University, 3-1-1 Asahi, Matsumoto, Nagano 390-8621, Japan}
\email{23ss103j@gmail.com}
\email{sakaik@shinshu-u.ac.jp}
\date{\today}

\begin{document}
\begin{abstract}
We obtain Polyak-Viro type formula for the Milnor triple linking number that can be applied to diagrams with triple or more multiple-crossings.
The proof is based on the idea of Brooks and Komendarczyk \cite{BrooksKomendarczyk24}, but is different from theirs in that we explicitly compute the value of configuration space integral associated to the ``Y-graph,'' and is applicable to the original Brooks-Komendarczyk formula for the Casson knot invariant.
The results for the Milnor triple linking number were firstly obtained in the first author's master thesis, in which the proof follows the same way as that in \cite{BrooksKomendarczyk24}.
\end{abstract}
\maketitle

\section{Introduction}
Configuration space integral \cite{AltschulerFreidel97,BottTaubes94,CCL02,Kohno94,Volic05} provides us a systematic way to express all the Vassiliev invariants for (long) knots and links.
It also enables us to compute Vassiliev invariants in combinatorial ways;
the Polyak-Viro type formulas \cite{PolyakViro94,PolyakViro01} can be deduced from configuration space integrals.
Moreover it motivates us to think of invariants for high dimensional embeddings as generalizations of Vassiliev invariants, see for example \cite{K14_2,KWatanabe08,Watanabe07}.

Robyn Brooks and Rafal Komendarczyk \cite{BrooksKomendarczyk24} got an idea to replace the standard volume form of $S^2$ with a 2-form whose support is localized to a neighborhood of a point (say the north pole).
This idea can work well only for the Vassiliev invariants of order two, but extremely simplifies the computation of configuration space integral since, with such a 2-form, the integrals ``localize'' to neighborhoods of crossings of knot diagrams and get to be computable in a combinatorial way.
Indeed Brooks and Komendarczyk were able to deduce Polyak-Viro type formula for the \emph{Casson knot invariant} that is applicable to diagrams with triple or more multiple crossings.
They firstly proved the original Polyak-Viro formula for the Casson invariant \cite{PolyakViro01} using configuration space integral, and then addressed the case of diagrams with multiple crossings by resolving the multiple crossings into ordinary double crossings to apply the original Polyak-Viro formula.
Such resolutions allow us to avoid the computation of configuration space integral associated to the ``Y-graph'' (see Figure~\ref{fig:Y_link} below).

In this paper we give a similar combinatorial formula for the \emph{Milnor triple linking number} of 3-component long links (Theorem~\ref{thm:main}).
The proof is different from that in \cite{BrooksKomendarczyk24} in that we explicitly compute the Y-graph integral.
Our computation clarifies the nature of the Y-graph as a ``correction term'' to some extent.
Our method can also be applied to reprove the formula in \cite{BrooksKomendarczyk24}, and might be useful for further study of Vassiliev invariants of order two.

The paper is organized as follows.
In \S\ref{s:Prerimilaries} we introduce arrow / Gauss diagrams that are needed to state our main results (Theorem~\ref{thm:main}).
\S\ref{s:CSI} describes three invariants using configuration space integrals; the linking number for 2-component links, the Casson knot invariant, and the Milnor triple linking number.
We review the integral expression of the linking numbers of 2-component links for two reasons.
One is that the linking number can be seen as the simplest example of configuration space integrals,
and the other is that we obtain a ``localizing technique'' of integrations through an explicit computation of the linking number.
In \S\ref{s:CSI_multiple_crossings} we give the proof of the main results by computing configuration space integrals in the case where diagrams have multiple crossings.

\section*{Acknowledgments}
The authors would like to express their great appreciation to Robyn Brooks and Rafal Komendarczyk for helpful comments and suggestions.
The secound author was partially supported by JSPS KAKENHI Grant Numbers 20K03608 and 23K20795.

\section{Preliminaries and the main results}\label{s:Prerimilaries}
\begin{defn}
A \emph{long knot} in $\R^3$ is an embedding $f\colon\R^1\hookrightarrow\R^3$ that satisfies $f(x)=(x,0,0)$ if $\abs{x}\ge 1$.
A \emph{3-component long link} in $\R^3$ is an embedding $f=f_1\sqcup f_2\sqcup f_3\colon\R^1\sqcup\R^1\sqcup\R^1\hookrightarrow\R^3$ that satisfies $f_k(x)=(x,(2-k)\abs{x},0)$ if $\abs{x}\ge 1$.
See Figure~\ref{fig:long_embeddings}.
\end{defn}
\begin{figure}
\centering
\input{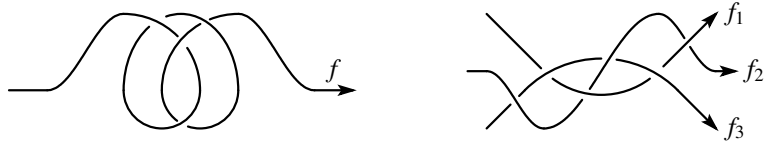}
\caption{Long knot and 3-component long link}
\label{fig:long_embeddings}
\end{figure}

Let $N=(0,0,1)\in\R^3$, and let $N^{\perp}$ be the $2$-dimensional subspace of $\R^3$ perpendicular to $N$.
We denote the projection from $\R^3$ onto $N^{\perp}$ (resp.~$\R N$) by $\pr_{12}$ (resp.~$\pr_3$).

In this paper, unless otherwise stated, a \emph{diagram} of a (long) knot or link $f$ in $\R^3$ is the projection of $f$ onto $N^{\perp}$ with over / under information, that may have \emph{transverse multiple crossings}.
Namely,  $\pr_{12}(f)$ may go through a point twice or more times, but all the tangent vectors at the point should generate mutually different lines.

\begin{rem}
We think of a multiple crossing as consisting of several fewer multiple crossings; for example, a triple crossing consists of three double crossings, and a quadruple crossing contains four triple crossings and six double crossings.
\end{rem}

Let $f$ be a long knot with a diagram $D$.
The \emph{arrow diagram $A_D$} for $D$ is the set of ordered pairs $(t_i,u_i)$ ($i=1,2,\dots$) of real numbers, where each $(t_i,u_i)$ corresponds to a double crossing $\pr_{12}(f(t_i))=\pr_{12}(f(u_i))$ of $D$ with $\pr_3(f(t_i))<\pr_3(f(u_i))$.
We depict $A_D$ as a graph that has vertices $t_i$ and $u_i$ ($i=1,2,\dots$) on $\R^1$ with oriented chords emanating from $t_i$ to $u_i$ (see Figures~\ref{fig:example_arrow_Gauss_double}, \ref{fig:example_arrow_Gauss_triple}).
The \emph{Gauss diagram} $G_D$ is $A_D$ with the signs of the double crossings of $D$ attached to corresponding chords.
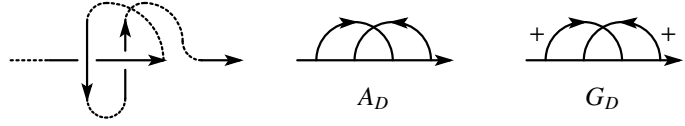
\begin{figure}
\centering
{\unitlength 0.1in%
\begin{picture}(35.0000,6.0000)(2.0000,-20.0000)%
%
\special{pn 13}%
\special{pa 600 1500}%
\special{pa 600 1900}%
\special{fp}%
\special{sh 1}%
\special{pa 600 1900}%
\special{pa 620 1833}%
\special{pa 600 1847}%
\special{pa 580 1833}%
\special{pa 600 1900}%
\special{fp}%
%
\special{pn 13}%
\special{pa 400 1700}%
\special{pa 550 1700}%
\special{fp}%
%
\special{pn 13}%
\special{pa 650 1700}%
\special{pa 1000 1700}%
\special{fp}%
\special{sh 1}%
\special{pa 1000 1700}%
\special{pa 933 1680}%
\special{pa 947 1700}%
\special{pa 933 1720}%
\special{pa 1000 1700}%
\special{fp}%
%
\special{pn 13}%
\special{pa 200 1700}%
\special{pa 400 1700}%
\special{dt 0.030}%
%
\special{pn 13}%
\special{pn 13}%
\special{pa 800 1900}%
\special{pa 799 1912}%
\special{fp}%
\special{pa 796 1927}%
\special{pa 793 1937}%
\special{fp}%
\special{pa 787 1950}%
\special{pa 781 1958}%
\special{fp}%
\special{pa 773 1968}%
\special{pa 767 1975}%
\special{fp}%
\special{pa 755 1983}%
\special{pa 746 1989}%
\special{fp}%
\special{pa 732 1995}%
\special{pa 722 1998}%
\special{fp}%
\special{pa 707 2000}%
\special{pa 694 2000}%
\special{fp}%
\special{pa 679 1998}%
\special{pa 668 1995}%
\special{fp}%
\special{pa 654 1989}%
\special{pa 645 1983}%
\special{fp}%
\special{pa 634 1975}%
\special{pa 627 1969}%
\special{fp}%
\special{pa 618 1958}%
\special{pa 613 1949}%
\special{fp}%
\special{pa 607 1935}%
\special{pa 604 1926}%
\special{fp}%
\special{pa 601 1912}%
\special{pa 600 1900}%
\special{fp}%
%
\special{pn 13}%
\special{pn 13}%
\special{pa 800 1500}%
\special{pa 801 1486}%
\special{fp}%
\special{pa 804 1470}%
\special{pa 809 1459}%
\special{fp}%
\special{pa 817 1445}%
\special{pa 824 1435}%
\special{fp}%
\special{pa 835 1424}%
\special{pa 844 1417}%
\special{fp}%
\special{pa 858 1409}%
\special{pa 871 1404}%
\special{fp}%
\special{pa 887 1401}%
\special{pa 900 1400}%
\special{fp}%
%
\special{pn 13}%
\special{pa 1200 1700}%
\special{pa 1400 1700}%
\special{fp}%
\special{sh 1}%
\special{pa 1400 1700}%
\special{pa 1333 1680}%
\special{pa 1347 1700}%
\special{pa 1333 1720}%
\special{pa 1400 1700}%
\special{fp}%
%
\special{pn 13}%
\special{pa 1700 1700}%
\special{pa 2500 1700}%
\special{fp}%
\special{sh 1}%
\special{pa 2500 1700}%
\special{pa 2433 1680}%
\special{pa 2447 1700}%
\special{pa 2433 1720}%
\special{pa 2500 1700}%
\special{fp}%
%
\special{pn 13}%
\special{pa 800 1900}%
\special{pa 800 1750}%
\special{fp}%
%
\special{pn 13}%
\special{pa 800 1650}%
\special{pa 800 1500}%
\special{fp}%
\special{sh 1}%
\special{pa 800 1500}%
\special{pa 780 1567}%
\special{pa 800 1553}%
\special{pa 820 1567}%
\special{pa 800 1500}%
\special{fp}%
%
\special{pn 13}%
\special{pn 13}%
\special{pa 700 1400}%
\special{pa 713 1400}%
\special{fp}%
\special{pa 729 1401}%
\special{pa 741 1403}%
\special{fp}%
\special{pa 757 1406}%
\special{pa 769 1408}%
\special{fp}%
\special{pa 785 1412}%
\special{pa 797 1416}%
\special{fp}%
\special{pa 812 1422}%
\special{pa 823 1427}%
\special{fp}%
\special{pa 838 1433}%
\special{pa 848 1439}%
\special{fp}%
\special{pa 862 1448}%
\special{pa 873 1455}%
\special{fp}%
\special{pa 886 1465}%
\special{pa 895 1472}%
\special{fp}%
\special{pa 907 1483}%
\special{pa 916 1492}%
\special{fp}%
\special{pa 927 1504}%
\special{pa 935 1514}%
\special{fp}%
\special{pa 945 1527}%
\special{pa 952 1538}%
\special{fp}%
\special{pa 961 1551}%
\special{pa 967 1562}%
\special{fp}%
\special{pa 973 1577}%
\special{pa 978 1588}%
\special{fp}%
\special{pa 984 1603}%
\special{pa 988 1615}%
\special{fp}%
\special{pa 992 1631}%
\special{pa 994 1643}%
\special{fp}%
\special{pa 997 1659}%
\special{pa 998 1671}%
\special{fp}%
\special{pa 1000 1687}%
\special{pa 1000 1700}%
\special{fp}%
%
\special{pn 13}%
\special{pn 13}%
\special{pa 600 1500}%
\special{pa 601 1486}%
\special{fp}%
\special{pa 604 1470}%
\special{pa 609 1459}%
\special{fp}%
\special{pa 617 1445}%
\special{pa 624 1435}%
\special{fp}%
\special{pa 635 1424}%
\special{pa 644 1417}%
\special{fp}%
\special{pa 658 1409}%
\special{pa 671 1404}%
\special{fp}%
\special{pa 687 1401}%
\special{pa 700 1400}%
\special{fp}%
%
\special{pn 13}%
\special{pn 13}%
\special{pa 900 1400}%
\special{pa 914 1400}%
\special{fp}%
\special{pa 931 1402}%
\special{pa 943 1405}%
\special{fp}%
\special{pa 959 1409}%
\special{pa 972 1413}%
\special{fp}%
\special{pa 987 1420}%
\special{pa 999 1426}%
\special{fp}%
\special{pa 1013 1435}%
\special{pa 1024 1443}%
\special{fp}%
\special{pa 1036 1454}%
\special{pa 1046 1463}%
\special{fp}%
\special{pa 1057 1476}%
\special{pa 1064 1486}%
\special{fp}%
\special{pa 1073 1501}%
\special{pa 1079 1512}%
\special{fp}%
\special{pa 1086 1528}%
\special{pa 1091 1540}%
\special{fp}%
\special{pa 1095 1556}%
\special{pa 1098 1569}%
\special{fp}%
\special{pa 1100 1587}%
\special{pa 1100 1600}%
\special{fp}%
%
\special{pn 13}%
\special{pn 13}%
\special{pa 1200 1700}%
\special{pa 1186 1699}%
\special{fp}%
\special{pa 1170 1696}%
\special{pa 1160 1692}%
\special{fp}%
\special{pa 1145 1684}%
\special{pa 1136 1677}%
\special{fp}%
\special{pa 1126 1667}%
\special{pa 1118 1658}%
\special{fp}%
\special{pa 1110 1643}%
\special{pa 1105 1630}%
\special{fp}%
\special{pa 1101 1613}%
\special{pa 1100 1600}%
\special{fp}%
%
\special{pn 13}%
\special{ar 2000 1700 200 200 3.1415927 4.7123890}%
%
\special{pn 13}%
\special{pa 1985 1501}%
\special{pa 2000 1500}%
\special{fp}%
\special{sh 1}%
\special{pa 2000 1500}%
\special{pa 1932 1484}%
\special{pa 1947 1504}%
\special{pa 1935 1524}%
\special{pa 2000 1500}%
\special{fp}%
%
\special{pn 13}%
\special{ar 2200 1700 200 200 4.7123890 6.2831853}%
%
\special{pn 13}%
\special{pa 2215 1501}%
\special{pa 2200 1500}%
\special{fp}%
\special{sh 1}%
\special{pa 2200 1500}%
\special{pa 2265 1524}%
\special{pa 2253 1504}%
\special{pa 2268 1484}%
\special{pa 2200 1500}%
\special{fp}%
%
\special{pn 13}%
\special{ar 2000 1700 200 200 4.7123890 6.2831853}%
%
\special{pn 13}%
\special{ar 2200 1700 200 200 3.1415927 4.7123890}%
\put(21.0000,-19.0000){\makebox(0,0){$A_D$}}%
%
\special{pn 13}%
\special{pa 2900 1700}%
\special{pa 3700 1700}%
\special{fp}%
\special{sh 1}%
\special{pa 3700 1700}%
\special{pa 3633 1680}%
\special{pa 3647 1700}%
\special{pa 3633 1720}%
\special{pa 3700 1700}%
\special{fp}%
%
\special{pn 13}%
\special{ar 3200 1700 200 200 3.1415927 4.7123890}%
%
\special{pn 13}%
\special{pa 3185 1501}%
\special{pa 3200 1500}%
\special{fp}%
\special{sh 1}%
\special{pa 3200 1500}%
\special{pa 3132 1484}%
\special{pa 3147 1504}%
\special{pa 3135 1524}%
\special{pa 3200 1500}%
\special{fp}%
%
\special{pn 13}%
\special{ar 3200 1700 200 200 4.7123890 6.2831853}%
%
\special{pn 13}%
\special{ar 3400 1700 200 200 3.1415927 4.7123890}%
%
\special{pn 13}%
\special{ar 3400 1700 200 200 4.7123890 6.2831853}%
%
\special{pn 13}%
\special{pa 3415 1501}%
\special{pa 3400 1500}%
\special{fp}%
\special{sh 1}%
\special{pa 3400 1500}%
\special{pa 3465 1524}%
\special{pa 3453 1504}%
\special{pa 3468 1484}%
\special{pa 3400 1500}%
\special{fp}%
\put(33.0000,-19.0000){\makebox(0,0){$G_D$}}%
\put(30.0000,-16.0000){\makebox(0,0)[rb]{$+$}}%
\put(36.0000,-16.0000){\makebox(0,0)[lb]{$+$}}%
\end{picture}}%
\caption{Examples of arrow / Gauss diagrams for double crossings}
\label{fig:example_arrow_Gauss_double}
\end{figure}
\begin{figure}
\centering
\input{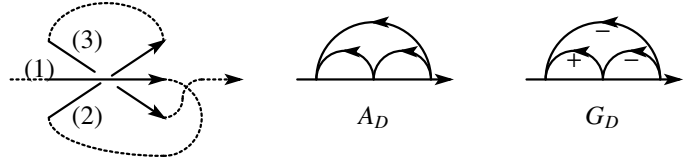}
\caption{Examples of arrow / Gauss diagrams for triple crossings; the arc (2) is front of (3)}
\label{fig:example_arrow_Gauss_triple}
\end{figure}

The \emph{3-strand arrow diagram} and the \emph{3-strand Gauss diagram} of a diagram $D$ of a 3-component long link are defined in the same way.
It is a union of three copies of $\R^1$ with vertices on these $\R^1$'s connected by oriented chords corresponding to double crossings (and the sings of these crossings are attached to these chords, see Figure~\ref{fig:example_arrow_Gauss_3-strand}).
\begin{figure}
\centering
{\unitlength 0.1in%
\begin{picture}(33.0000,5.5000)(4.0000,-9.2700)%
%
\special{pn 13}%
\special{pa 750 700}%
\special{pa 1000 700}%
\special{fp}%
\special{sh 1}%
\special{pa 1000 700}%
\special{pa 933 680}%
\special{pa 947 700}%
\special{pa 933 720}%
\special{pa 1000 700}%
\special{fp}%
%
\special{pn 13}%
\special{pa 400 900}%
\special{pa 1000 500}%
\special{fp}%
\special{sh 1}%
\special{pa 1000 500}%
\special{pa 933 520}%
\special{pa 956 530}%
\special{pa 956 554}%
\special{pa 1000 500}%
\special{fp}%
%
\special{pn 13}%
\special{pa 400 500}%
\special{pa 640 660}%
\special{fp}%
%
\special{pn 13}%
\special{pa 760 740}%
\special{pa 1000 900}%
\special{fp}%
\special{sh 1}%
\special{pa 1000 900}%
\special{pa 956 846}%
\special{pa 956 870}%
\special{pa 933 880}%
\special{pa 1000 900}%
\special{fp}%
%
\special{pn 13}%
\special{pa 1700 700}%
\special{pa 1900 700}%
\special{fp}%
\special{sh 1}%
\special{pa 1900 700}%
\special{pa 1833 680}%
\special{pa 1847 700}%
\special{pa 1833 720}%
\special{pa 1900 700}%
\special{fp}%
%
\special{pn 13}%
\special{ar 1950 700 150 150 4.7123890 6.2831853}%
%
\special{pn 13}%
\special{ar 2250 700 150 150 4.7123890 6.2831853}%
%
\special{pn 13}%
\special{pa 2265 551}%
\special{pa 2250 550}%
\special{fp}%
\special{sh 1}%
\special{pa 2250 550}%
\special{pa 2315 574}%
\special{pa 2303 554}%
\special{pa 2318 534}%
\special{pa 2250 550}%
\special{fp}%
%
\special{pn 13}%
\special{ar 2100 700 300 300 4.7123890 6.2831853}%
%
\special{pn 13}%
\special{pa 2115 400}%
\special{pa 2100 400}%
\special{fp}%
\special{sh 1}%
\special{pa 2100 400}%
\special{pa 2167 420}%
\special{pa 2153 400}%
\special{pa 2167 380}%
\special{pa 2100 400}%
\special{fp}%
\put(11.0000,-7.0000){\makebox(0,0){$f_1$}}%
\put(11.0000,-5.0000){\makebox(0,0){$f_2$}}%
\put(11.0000,-9.0000){\makebox(0,0){$f_3$}}%
%
\special{pn 13}%
\special{ar 1950 700 150 150 3.1415927 4.7123890}%
%
\special{pn 13}%
\special{pa 1935 551}%
\special{pa 1950 550}%
\special{fp}%
\special{sh 1}%
\special{pa 1950 550}%
\special{pa 1882 534}%
\special{pa 1897 554}%
\special{pa 1885 574}%
\special{pa 1950 550}%
\special{fp}%
\put(31.5000,-6.0000){\makebox(0,0){$-$}}%
%
\special{pn 13}%
\special{pa 2900 700}%
\special{pa 3100 700}%
\special{fp}%
\special{sh 1}%
\special{pa 3100 700}%
\special{pa 3033 680}%
\special{pa 3047 700}%
\special{pa 3033 720}%
\special{pa 3100 700}%
\special{fp}%
%
\special{pn 13}%
\special{ar 2250 700 150 150 3.1415927 4.7123890}%
%
\special{pn 13}%
\special{ar 2100 700 300 300 3.1415927 4.7123890}%
%
\special{pn 13}%
\special{ar 3300 700 300 300 4.7123890 6.2831853}%
%
\special{pn 13}%
\special{pa 3315 400}%
\special{pa 3300 400}%
\special{fp}%
\special{sh 1}%
\special{pa 3300 400}%
\special{pa 3367 420}%
\special{pa 3353 400}%
\special{pa 3367 380}%
\special{pa 3300 400}%
\special{fp}%
%
\special{pn 13}%
\special{ar 3300 700 300 300 3.1415927 4.7123890}%
%
\special{pn 13}%
\special{ar 3150 700 150 150 4.7123890 6.2831853}%
%
\special{pn 13}%
\special{ar 3150 700 150 150 3.1415927 4.7123890}%
%
\special{pn 13}%
\special{pa 3135 551}%
\special{pa 3150 550}%
\special{fp}%
\special{sh 1}%
\special{pa 3150 550}%
\special{pa 3082 534}%
\special{pa 3097 554}%
\special{pa 3085 574}%
\special{pa 3150 550}%
\special{fp}%
%
\special{pn 13}%
\special{ar 3450 700 150 150 3.1415927 4.7123890}%
%
\special{pn 13}%
\special{ar 3450 700 150 150 4.7123890 6.2831853}%
%
\special{pn 13}%
\special{pa 3465 551}%
\special{pa 3450 550}%
\special{fp}%
\special{sh 1}%
\special{pa 3450 550}%
\special{pa 3515 574}%
\special{pa 3503 554}%
\special{pa 3518 534}%
\special{pa 3450 550}%
\special{fp}%
\put(34.5000,-6.0000){\makebox(0,0){$-$}}%
\put(33.0000,-4.5000){\makebox(0,0){$-$}}%
\put(21.0000,-10.0000){\makebox(0,0){$A_D$}}%
\put(33.0000,-10.0000){\makebox(0,0){$G_D$}}%
%
\special{pn 13}%
\special{pa 2000 700}%
\special{pa 2200 700}%
\special{fp}%
\special{sh 1}%
\special{pa 2200 700}%
\special{pa 2133 680}%
\special{pa 2147 700}%
\special{pa 2133 720}%
\special{pa 2200 700}%
\special{fp}%
%
\special{pn 13}%
\special{pa 2300 700}%
\special{pa 2500 700}%
\special{fp}%
\special{sh 1}%
\special{pa 2500 700}%
\special{pa 2433 680}%
\special{pa 2447 700}%
\special{pa 2433 720}%
\special{pa 2500 700}%
\special{fp}%
%
\special{pn 13}%
\special{pa 3200 700}%
\special{pa 3400 700}%
\special{fp}%
\special{sh 1}%
\special{pa 3400 700}%
\special{pa 3333 680}%
\special{pa 3347 700}%
\special{pa 3333 720}%
\special{pa 3400 700}%
\special{fp}%
%
\special{pn 13}%
\special{pa 3500 700}%
\special{pa 3700 700}%
\special{fp}%
\special{sh 1}%
\special{pa 3700 700}%
\special{pa 3633 680}%
\special{pa 3647 700}%
\special{pa 3633 720}%
\special{pa 3700 700}%
\special{fp}%
%
\special{pn 13}%
\special{pa 400 700}%
\special{pa 650 700}%
\special{fp}%
\put(18.0000,-8.0000){\makebox(0,0){$f_1$}}%
\put(21.0000,-8.0000){\makebox(0,0){$f_2$}}%
\put(24.0000,-8.0000){\makebox(0,0){$f_3$}}%
\put(36.0000,-8.0000){\makebox(0,0){$f_3$}}%
\put(33.0000,-8.0000){\makebox(0,0){$f_2$}}%
\put(30.0000,-8.0000){\makebox(0,0){$f_1$}}%
\end{picture}}%
\caption{Examples of arrow / Gauss diagrams for triple crossings; the three arcs are contained in mutually distinct components of a 3-component link, and the arc of $f_1$ is front of that of $f_3$}
\label{fig:example_arrow_Gauss_3-strand}
\end{figure}
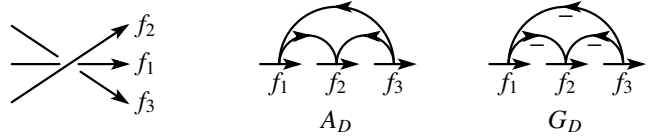

\begin{defn}\label{def:pairing_for_knots}
For a diagram $D$ of a long knot,
let 
$C^2(D)$ the set of unordered pairs $(\alpha,\beta)$ of chords $\alpha,\beta$ of $A_D$.
Let $X$ and $X'_k$, $k=1,2,3$, be the arrow diagrams as shown in Figure~\ref{fig:X_knot}, and define
\begin{equation}
 C^2_*(D)\coloneqq\{(\alpha,\beta)\in C^2(D)\mid\alpha\text{ and }\beta\text{ form a subdiagram of }A_D\text{ that is isomorphic to }*\},
 \quad
 *=X,X'_1,X'_2\text{ or }X'_3.
\end{equation}
Then define
\begin{equation}
 \pair{X}{G_D} \coloneqq\sum_{(\alpha,\beta)\in C^2_X(D)}\sgn(\alpha)\sgn(\beta),
 \quad
 \pair{X'_k}{G_D}\coloneqq\sum_{(\alpha,\beta)\in C^2_{X'_k}(D)}\sgn(\alpha)\sgn(\beta),
\end{equation}
where $\sgn(\alpha)\in\{+1,-1\}$ is the sign of $\alpha$ seen as a chord of $G_D$.
\end{defn}
\begin{figure}
\centering
{\unitlength 0.1in%
\begin{picture}(40.0000,4.2700)(4.0000,-8.2700)%
%
\special{pn 13}%
\special{pa 400 700}%
\special{pa 1200 700}%
\special{fp}%
\special{sh 1}%
\special{pa 1200 700}%
\special{pa 1133 680}%
\special{pa 1147 700}%
\special{pa 1133 720}%
\special{pa 1200 700}%
\special{fp}%
%
\special{pn 13}%
\special{ar 650 700 200 200 4.7123890 6.2831853}%
%
\special{pn 13}%
\special{ar 850 700 200 200 4.7123890 6.2831853}%
%
\special{pn 13}%
\special{pa 865 501}%
\special{pa 850 500}%
\special{fp}%
\special{sh 1}%
\special{pa 850 500}%
\special{pa 915 524}%
\special{pa 903 504}%
\special{pa 918 484}%
\special{pa 850 500}%
\special{fp}%
%
\special{pn 13}%
\special{ar 650 700 200 200 3.1415927 4.7123890}%
%
\special{pn 13}%
\special{pa 635 501}%
\special{pa 650 500}%
\special{fp}%
\special{sh 1}%
\special{pa 650 500}%
\special{pa 582 484}%
\special{pa 597 504}%
\special{pa 585 524}%
\special{pa 650 500}%
\special{fp}%
%
\special{pn 13}%
\special{ar 850 700 200 200 3.1415927 4.7123890}%
\put(7.5000,-9.0000){\makebox(0,0){$X$}}%
%
\special{pn 13}%
\special{pa 1600 700}%
\special{pa 2400 700}%
\special{fp}%
\special{sh 1}%
\special{pa 2400 700}%
\special{pa 2333 680}%
\special{pa 2347 700}%
\special{pa 2333 720}%
\special{pa 2400 700}%
\special{fp}%
%
\special{pn 13}%
\special{ar 1900 700 200 200 3.1415927 4.7123890}%
%
\special{pn 13}%
\special{pa 1885 501}%
\special{pa 1900 500}%
\special{fp}%
\special{sh 1}%
\special{pa 1900 500}%
\special{pa 1832 484}%
\special{pa 1847 504}%
\special{pa 1835 524}%
\special{pa 1900 500}%
\special{fp}%
%
\special{pn 13}%
\special{ar 1900 700 200 200 4.7123890 6.2831853}%
%
\special{pn 13}%
\special{ar 2000 700 300 300 4.7123890 6.2831853}%
%
\special{pn 13}%
\special{pa 2015 400}%
\special{pa 2000 400}%
\special{fp}%
\special{sh 1}%
\special{pa 2000 400}%
\special{pa 2067 420}%
\special{pa 2053 400}%
\special{pa 2067 380}%
\special{pa 2000 400}%
\special{fp}%
%
\special{pn 13}%
\special{ar 2000 700 300 300 3.1415927 4.7123890}%
\put(20.0000,-9.0000){\makebox(0,0){$X'_1$}}%
%
\special{pn 13}%
\special{pa 2600 700}%
\special{pa 3400 700}%
\special{fp}%
\special{sh 1}%
\special{pa 3400 700}%
\special{pa 3333 680}%
\special{pa 3347 700}%
\special{pa 3333 720}%
\special{pa 3400 700}%
\special{fp}%
%
\special{pn 13}%
\special{ar 2850 700 150 150 4.7123890 6.2831853}%
%
\special{pn 13}%
\special{ar 3150 700 150 150 4.7123890 6.2831853}%
%
\special{pn 13}%
\special{pa 3165 551}%
\special{pa 3150 550}%
\special{fp}%
\special{sh 1}%
\special{pa 3150 550}%
\special{pa 3215 574}%
\special{pa 3203 554}%
\special{pa 3218 534}%
\special{pa 3150 550}%
\special{fp}%
%
\special{pn 13}%
\special{ar 3150 700 150 150 3.1415927 4.7123890}%
%
\special{pn 13}%
\special{ar 2850 700 150 150 3.1415927 4.7123890}%
%
\special{pn 13}%
\special{pa 2835 551}%
\special{pa 2850 550}%
\special{fp}%
\special{sh 1}%
\special{pa 2850 550}%
\special{pa 2782 534}%
\special{pa 2797 554}%
\special{pa 2785 574}%
\special{pa 2850 550}%
\special{fp}%
\put(30.0000,-9.0000){\makebox(0,0){$X'_2$}}%
%
\special{pn 13}%
\special{pa 3600 700}%
\special{pa 4400 700}%
\special{fp}%
\special{sh 1}%
\special{pa 4400 700}%
\special{pa 4333 680}%
\special{pa 4347 700}%
\special{pa 4333 720}%
\special{pa 4400 700}%
\special{fp}%
%
\special{pn 13}%
\special{ar 4000 700 300 300 4.7123890 6.2831853}%
%
\special{pn 13}%
\special{ar 4000 700 300 300 3.1415927 4.7123890}%
%
\special{pn 13}%
\special{pa 3985 400}%
\special{pa 4000 400}%
\special{fp}%
\special{sh 1}%
\special{pa 4000 400}%
\special{pa 3933 380}%
\special{pa 3947 400}%
\special{pa 3933 420}%
\special{pa 4000 400}%
\special{fp}%
%
\special{pn 13}%
\special{ar 4100 700 200 200 3.1415927 4.7123890}%
%
\special{pn 13}%
\special{ar 4100 700 200 200 4.7123890 6.2831853}%
%
\special{pn 13}%
\special{pa 4115 501}%
\special{pa 4100 500}%
\special{fp}%
\special{sh 1}%
\special{pa 4100 500}%
\special{pa 4165 524}%
\special{pa 4153 504}%
\special{pa 4168 484}%
\special{pa 4100 500}%
\special{fp}%
\put(40.0000,-9.0000){\makebox(0,0){$X'_3$}}%
\end{picture}}%
\caption{Arrow diagrams $X$ and $X'_k$ ($k=1,2,3$)}
\label{fig:X_knot}
\end{figure}
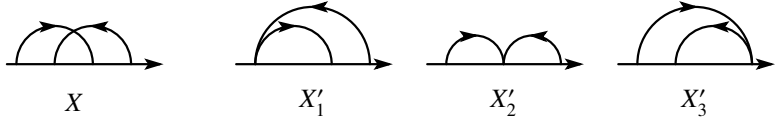

Let $c$ denote the \emph{Casson invariant} of (long) knots, namely the coefficient of the quadratic term of the Alexander-Conway polynomial.
Brooks and Komendarczyk \cite{BrooksKomendarczyk24} gave a combinatorial formula that computes the value of $c$ from (long) knot diagrams possibly with multiple crossings.

\begin{thm}[{\cite[Theorem A, B]{BrooksKomendarczyk24}, see also \cite[Theorem 1]{PolyakViro01}}]\label{thm:BrooksKomendarczyk}
If a diagram $D$ of a long knot $f$ has only transverse double crossing, then
\begin{equation}\label{eq:Polyak_formula_for_Casson}
 c(f)=\pair{X}{G_D}.
\end{equation}
For a general diagram $D$ of a long knot $f$, we have
\begin{equation}\label{eq:generalized_Polyak_formula_for_Casson}
 c(f)=\pair{X}{G_D}+\frac{1}{2}\left(\pair{X'_1}{G_D}+\pair{X'_2}{G_D}+\pair{X'_3}{G_D}\right).
\end{equation}
\end{thm}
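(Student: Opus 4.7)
The plan is to prove both formulas via configuration space integrals, following the Brooks-Komendarczyk idea of localizing the Gauss map to a neighborhood of the north pole $N=(0,0,1)\in S^2$. I would first establish \eqref{eq:Polyak_formula_for_Casson} for diagrams with only double crossings, and then derive \eqref{eq:generalized_Polyak_formula_for_Casson} by resolving each multiple crossing of $D$ into transverse double crossings.

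For the double-crossing case, start from the standard expression
\[
 c(f)=I_X(f)+c_Y\,I_Y(f),
\]
where $I_X$ and $I_Y$ are the configuration space integrals associated with the X-graph and Y-graph respectively, and $c_Y$ is a combinatorial constant. Choose a bump $2$-form $\omega_\epsilon\in\Omega^2(S^2)$ with $\int_{S^2}\omega_\epsilon=1$ whose support shrinks to $N$ as $\epsilon\to 0$. Pulling $\omega_\epsilon$ back under the normalized difference map $\phi(x,y)=(f(y)-f(x))/\lvert f(y)-f(x)\rvert$, the support of $\phi^*\omega_\epsilon$ concentrates at pairs $(x,y)$ with $\pr_{12}(f(x))=\pr_{12}(f(y))$, which are exactly the double crossings of $D$. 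Consequently $I_X$ localizes to a sum over pairs of double crossings, and a direct computation shows that each pair contributes $\sgn(\alpha)\sgn(\beta)$ precisely when the corresponding arrows $\alpha,\beta$ interlink as in the pattern $X$ of Figure~\ref{fig:X_knot}. With the same localized $2$-form the Y-graph integral is forced into a measure-zero locus (the internal vertex in $\R^3$ has no admissible placement once two of the three Gauss vectors are nearly vertical), so $I_Y$ vanishes and we obtain \eqref{eq:Polyak_formula_for_Casson}.

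For the general case, perturb every multiple crossing of $D$ by a small generic isotopy to obtain a diagram $D'$ of $f$ with only transverse double crossings. Applying \eqref{eq:Polyak_formula_for_Casson} to $D'$ gives $c(f)=\pair{X}{G_{D'}}$, so the task reduces to rewriting $\pair{X}{G_{D'}}$ in terms of subdiagrams of $G_D$. Split $C^2_X(D')$ according to whether the two arrows of a pair come from distinct multiple crossings of $D$ or from a single multiple crossing. Pairs of the first kind are unaffected by a small perturbation and contribute exactly $\pair{X}{G_D}$. For a pair of the second kind, each triple crossing of $D$ produces three resolution arrows $\alpha_{12},\alpha_{13},\alpha_{23}$, whose three unordered pairs are each an $X$-configuration in $A_{D'}$ whenever the corresponding local arrow pattern in $A_D$ is one of $X'_1,X'_2,X'_3$. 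Since two generically distinct perturbations give the same invariant $c(f)$, averaging the two resulting counts produces the factor $\tfrac{1}{2}$, and higher multiple crossings are reduced to this case by treating them as unions of triple crossings.

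The main obstacle I expect is the combinatorial bookkeeping in the last step: one must verify, uniformly over all types of multiple crossings and all generic perturbations, that the within-crossing contribution to $\pair{X}{G_{D'}}-\pair{X}{G_D}$ is exactly $\tfrac{1}{2}\bigl(\pair{X'_1}{G_D}+\pair{X'_2}{G_D}+\pair{X'_3}{G_D}\bigr)$. This will require a careful enumeration of the possible local height orderings at a multiple crossing and the induced sign data, together with a consistency check that the two choices of resolution yield the two terms whose average collapses to the $X'_k$ sum with coefficient $\tfrac{1}{2}$.
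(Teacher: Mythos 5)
Your first half (diagrams with only double crossings) is essentially the paper's own argument: localize $\eta$ near $N$, observe that $I_X$ concentrates at pairs of double crossings and reproduces $\pair{X}{G_D}$, and that $I_Y$ dies because its support would require a triple point of the diagram. One small correction: the right reason $I_Y$ vanishes is not that the internal vertex is confined to a measure-zero locus, but that for small $\epsilon$ the support of $\omega_Y$ is \emph{empty} --- all three Gauss vectors being near $N$ forces $f(x_1),f(x_2),f(x_3)$ to project to a common point with the prescribed over/under pattern, i.e.\ forces a triple point, which $D$ does not have.

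For the general formula your route diverges from the paper's, and this is where the gap sits. You propose to resolve each multiple crossing into double crossings, apply the already-proved formula to the resolved diagram $D'$, and recover the $\tfrac12\sum_k\pair{X'_k}{G_D}$ term by averaging over two resolutions. This is precisely the strategy of Brooks--Komendarczyk that the present paper deliberately avoids: here the authors instead keep the multiple crossings, build a local model of a triple point out of three segments $S_1,S_2,S_3$, show by explicit symmetry arguments (reflections and orientation reversals of the parametrizations, using the $\O(2)$-invariance of $\eta$) that $I_Y$ receives no contribution from triple points and that the $X'_k$-type contributions satisfy $I_X^{X'_k,s_1,s_2,s_3}=\pm I_X^{X'_k,s_1,-s_2,-s_3}$, and then pin down the value $\tfrac12$ by evaluating $c$ on two explicit unknot diagrams --- so the $\tfrac12$ has a different origin than in your argument. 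Your version \emph{can} be made to work (it is Theorem B of \cite{BrooksKomendarczyk24}), but as written the decisive step is only asserted: you defer the verification that the within-crossing part of $\pair{X}{G_{D'}}$ averages to $\tfrac12\bigl(\pair{X'_1}{G_D}+\pair{X'_2}{G_D}+\pair{X'_3}{G_D}\bigr)$, and that verification is the entire content of the statement. Two concrete points you would have to handle: (i) you must check, for each triple-point type of Figure~\ref{fig:all_triple_arrow}, that a pair of arrows forming an $X'_k$ becomes an $X$ in exactly one of the two resolutions, with the correct sign; (ii) your reduction of higher multiple crossings to ``unions of triple crossings'' overlooks that a quadruple point already contains pairs of arrows with four \emph{distinct} endpoints (e.g.\ the arrows joining strands $1,3$ and $2,4$), which form genuine $X$-patterns counted in $\pair{X}{G_D}$ rather than $X'_k$-patterns, so the split of $C^2_X(D')$ must be by whether the two arrows share an endpoint, not by whether they come from the same multiple crossing.
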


Our main results are analogous formulas to \eqref{eq:Polyak_formula_for_Casson} and \eqref{eq:generalized_Polyak_formula_for_Casson} for the \emph{Milnor triple linking number} \cite{Milnor57}.

\begin{defn}
For a diagram $D$ of a 3-component long link $f$,
let $C^2(D)$ be defined in the same way as in Definition~\ref{def:pairing_for_knots}.
Let $\X_k$ and $\mathcal{X}'_k$, $k=1,2,3$, be the arrow diagrams as shown in Figure~\ref{fig:X_link}, and define
\begin{equation}
 C^2_*(D)\coloneqq\{(\alpha,\beta)\in C^2(D)\mid\alpha\text{ and }\beta\text{ form a subdiagram of }A_D\text{ that is isomorphic to }*\},
 \quad *=\X_k\text{ or }\X'_k\ (k=1,2,3).
\end{equation}
Then define
\begin{equation}
 \pair{\X_k}{G_D}\coloneqq\sum_{(\alpha,\beta)\in C^2_{\X_k}(D)}\sgn(\alpha)\sgn(\beta),
 \quad
 \pair{\X'_k}{G_D}\coloneqq\sum_{(\alpha,\beta)\in C^2_{\X'_k}(D)}\sgn(\alpha)\sgn(\beta).
\end{equation}
\end{defn}
\begin{figure}
\centering
\input{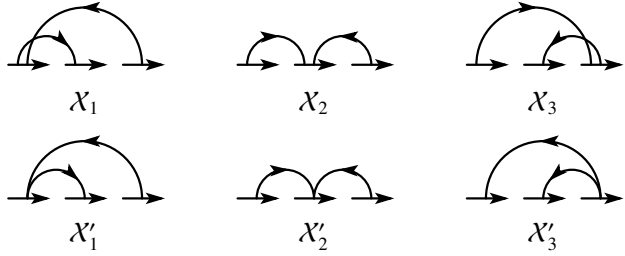}
\caption{Arrow diagrams $\X_k$ and $\X'_k$ ($k=1,2,3$)}
\label{fig:X_link}
\end{figure}

\begin{thm}\label{thm:main}
If a diagram $D$ of a 3-component long link $f$ has only transverse double crossing, then
\begin{equation}\label{eq:Polyak_formula_for_triple_linking}
 \mu(f)=\pair{\X_1}{G_D}+\pair{\X_2}{G_D}+\pair{\X_3}{G_D}.
\end{equation}
For a general diagram $D$ of a 3-component long link $f$, we have
\begin{equation}\label{eq:generalized_Polyak_formula_for_triple_linking}
 \mu(f)=\pair{\X_1}{G_D}+\pair{\X_2}{G_D}+\pair{\X_3}{G_D}+\frac{1}{2}\left(\pair{\X'_1}{G_D}+\pair{\X'_2}{G_D}+\pair{\X'_3}{G_D}\right).
\end{equation}
\end{thm}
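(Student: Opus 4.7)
The plan is to start from the configuration space integral representation of $\mu(f)$ (to be set up in \S\ref{s:CSI}), which decomposes as a sum $I_\theta(f)+I_Y(f)$ of a ``$\theta$-graph'' integral and a ``Y-graph'' correction integral. Following the idea of \cite{BrooksKomendarczyk24}, I would replace the standard $\SO(3)$-invariant volume form of $S^2$ appearing in both integrals by a 2-form $\omega$ supported in an arbitrarily small disk around the north pole $N=(0,0,1)$ and satisfying $\int_{S^2}\omega=1$. This is the same localization device that will be validated in \S\ref{s:CSI} on the simplest example, namely the linking number of two-component links.

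With this choice, each Gauss map factor $\phi_{ij}\colon (t,u)\mapsto (f_i(t)-f_j(u))/\abs{f_i(t)-f_j(u)}$ appearing in $I_\theta(f)$ pulls $\omega$ back to a form whose support lies in a tiny neighborhood of the fiber $\phi_{ij}^{-1}(N)$, and this fiber is precisely the set of double crossings of $D$ at which a strand of $f_i$ passes \emph{over} one of $f_j$. Consequently $I_\theta(f)$ collapses to a finite sum indexed by triples of chords of $A_D$, one chord for each ordered pair of components. Comparing the geometric placement of the three chords with the templates $\X_k$ of Figure~\ref{fig:X_link} and tracking signs through $\sgn(\alpha)\sgn(\beta)$ yields the contribution $\sum_{k}\pair{\X_k}{G_D}$. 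When $D$ has only transverse double crossings, the Y-graph integral $I_Y(f)$ vanishes after localization because its fiber over the target point misses the support of the localized integrand, and \eqref{eq:Polyak_formula_for_triple_linking} follows.

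For a general diagram with transverse multiple crossings, the $\theta$-graph analysis is unchanged and still produces $\sum_k\pair{\X_k}{G_D}$, but $I_Y(f)$ no longer vanishes: in a neighborhood of each multiple crossing of $D$, several strands project to one common point, and the Y-graph fiber over $N$ acquires a genuinely nontrivial contribution concentrated there. I would compute this contribution explicitly, one strand-triple at a time: at each triple substructure of a multiple crossing, after a suitable change of variables adapted to the tripod vertex, the integral reduces to an elementary integral over a 2-simplex parametrizing configurations of tripod feet on the three strands. Summing the resulting universal value over all such triples and pairing with the corresponding subdiagrams $\X'_k$ of $G_D$ produces exactly the correction $\tfrac{1}{2}\sum_k\pair{\X'_k}{G_D}$, giving \eqref{eq:generalized_Polyak_formula_for_triple_linking}.

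The main obstacle is precisely this explicit evaluation of the Y-graph integral at a multiple crossing; the factor $\tfrac{1}{2}$ and the precise shape of the templates $\X'_k$ must emerge from it. Brooks and Komendarczyk avoided this step for the Casson invariant by resolving multiple crossings into double ones and invoking their already-proved double-crossing formula, whereas our approach meets the integral head on, which as noted in the abstract is what makes the method re-applicable to Theorem~\ref{thm:BrooksKomendarczyk}. A secondary technical point is the independence of the localized form $\omega$: one must verify that the substitution of $\omega$ for the standard volume form does not alter the value of the configuration space integral, i.e.\ that the difference is exact as a form on the relevant compactified configuration space and that boundary contributions cancel.
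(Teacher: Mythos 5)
Your overall framework (localize the unit $2$-form near the north pole, reduce the $\X_k$-integrals to crossing counts, treat the multiple-crossing case separately) matches the paper's, and your treatment of the double-crossing-only case \eqref{eq:Polyak_formula_for_triple_linking} is essentially the paper's argument. But your plan for \eqref{eq:generalized_Polyak_formula_for_triple_linking} misattributes the source of the correction term, and this is not a cosmetic difference: it points the computation at an integral that is actually zero, while missing the integral that actually produces $\tfrac12\sum_k\pair{\X'_k}{G_D}$.

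Concretely, you claim that at a multiple crossing ``the $\theta$-graph analysis is unchanged and still produces $\sum_k\pair{\X_k}{G_D}$'' while $I_{\Y}$ ``acquires a genuinely nontrivial contribution.'' The paper proves the opposite on both counts. First, the Y-graph contribution at a triple point vanishes for sufficiently small $\epsilon$ (Proposition~\ref{prop:triple_contribution_Y}): reversing the parametrization of the over-strand versus the two under-strands produces two models of the same reoriented triple point, related to the original one by an orientation-reversing and an orientation-preserving reparametrization respectively; the $\O(2)$-invariance of $\eta$ (hypothesis (ii), which is imposed precisely for this purpose) forces $I_{\Y}^{\leftarrow,s_1,s_2,s_3}=-I_{\Y}^{\leftarrow,s_1,s_2,s_3}=0$. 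So the ``elementary integral over a 2-simplex'' you propose to evaluate would come out to $0$, not to the needed $\tfrac12$. Second, the $\X_k$-integrals are \emph{not} unchanged: at a triple point the two crossings underlying a pair of chords coincide in projection, so $\supp(\omega_{\X_k})$ contains configurations in which all four points cluster at that single multiple point (Figure~\ref{fig:triple_contribution_X}); these are exactly the $\X'_k$-type configurations, and they are where the correction term lives. Finally, the paper does not obtain the value $\tfrac12$ by direct integration at all: it derives sign relations among the local contributions (Lemma~\ref{lem:I_X'}, again via reflections and orientation reversals) and then pins down the remaining unknowns by evaluating the full invariant on explicit unknotted diagrams with one triple point, where the answer must be $0$ (equations \eqref{eq:knot_1}--\eqref{eq:knot_3}). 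If you want to salvage your head-on computation, it must be aimed at the localized $\X_k$-integrals over $(-\delta,\delta)^{\times 4}$ near a triple point, not at the Y-graph integral.
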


\begin{rem}
\eqref{eq:Polyak_formula_for_triple_linking} has already been known as \cite[Theorem 3]{Polyak97}, proved in a different way from ours.
\end{rem}

\section{Configuration space integral and the Polyak formula for the Milnor triple linking number}\label{s:CSI}
Configuration space integral is firstly introduced in \cite{BottTaubes94,Kohno94} to produce all the Vassiliev invariants for knots from appropriate formal sums of trivalent graphs,
and then generalized in \cite{CCL02,KoytcheffMunsonVolic13} to give cochain maps (up to some correction terms) from graph complexes to de Rham complex of the spaces of (long) knots and links.
In this paper we only give the constructions of the linking number, the Casson knot invariant and the Milnor triple linking number.
See also \cite{AltschulerFreidel97,Koytcheff14,Volic05} for general construction.

\subsection{The linking number of 2-component links}
For simplicity, we treat with ordinary 2-component links, namely embeddings $f\colon S^1\sqcup S^1\hookrightarrow\R^3$.

For a 2-component link $f=(f_1,f_2)$,
define a map
\begin{equation}
 h=h_f\colon S^1\times S^1\to S^2,
 \quad
 h(x_1,x_2)\coloneqq\frac{f_2(x_2)-f_1(x_1)}{\abs{f_2(x_2)-f_1(x_1)}}.
\end{equation}
Then the linking number of $f$, the mapping degree of $h$,
is given by
\begin{equation}\label{eq:linking_number}
 \link(f)\coloneqq\int_{S^1\times S^1}h^*\eta,
\end{equation}
here $\eta$ is a ``Dirac-like'' $2$-form $\eta=\eta^N_{\epsilon}\in\Omega^2_{dR}(S^2)$ that satisfies
\begin{enumerate}[(i)]
\item
	$\supp(\eta)$ is contained in the $\epsilon$-neighborhood of the north pole $N=(0,0,1)\in\R^3$,
\item
	$\eta$ is invariant under the action of $\O(2)=\O(2)\times\{\id\}\subset\O(3)$ (namely $\varphi^*\eta=(-1)^{\det\varphi}\eta$ for $\varphi\in\O(2)$), and
\item
	$\eta$ is a unit 2-form, that is, $\displaystyle\int_{S^2}\eta=1$.
\end{enumerate}
The construction \eqref{eq:linking_number} is encoded by the graph shown in Figure~\ref{fig:graph_linking_number};
\begin{figure}
\centering
{\unitlength 0.1in%
\begin{picture}(16.0000,6.0000)(3.0000,-8.0000)%
%
\special{pn 13}%
\special{ar 600 500 300 300 0.0000000 6.2831853}%
%
\special{pn 13}%
\special{ar 1600 500 300 300 0.0000000 6.2831853}%
%
\special{pn 13}%
\special{pa 900 500}%
\special{pa 1300 500}%
\special{fp}%
%
\special{sh 1.000}%
\special{ia 900 500 25 25 0.0000000 6.2831853}%
\special{pn 13}%
\special{ar 900 500 25 25 0.0000000 6.2831853}%
%
\special{sh 1.000}%
\special{ia 1300 500 25 25 0.0000000 6.2831853}%
\special{pn 13}%
\special{ar 1300 500 25 25 0.0000000 6.2831853}%
\end{picture}}%
\caption{The graph that corresponds to the linking number}
\label{fig:graph_linking_number}
\end{figure}
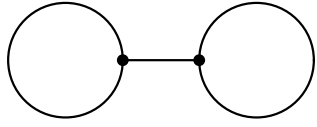
we consider $S^1\times S^1$, thought of as ``two-point configuration space on $S^1$,'' associated to the two vertices on the circles (one on each component),
and also consider the map $h$ associated to the edge connecting these two vertices.

The integral \eqref{eq:linking_number} is independent of the choice of non-exact unit 2-forms of $S^2$.
Following \cite{BrooksKomendarczyk24} we choose $\eta$, that simplifies computations of the integrals as we see in Example~\ref{ex:linking_number} below.
In fact, we do not need to impose the assumption (ii) to define the linking number;
we need it only for the Milnor triple linking number.

\begin{ex}\label{ex:linking_number}
Let $f=(f_1,f_2)$ be the Hopf link (see Figure~\ref{fig:Hopf}), and suppose a diagram $D$ of $f$ has exactly two crossings $p$ and $q$ with $f_2$ being the over-arc at $p=\pr_{12}(f_1(t))=\pr_{12}(f_2(u))$.
If the radius $\epsilon>0$ of $\supp(\eta)$ is sufficiently close to $0$,
then $h(x_1,x_2)\in\supp(\eta)$ only if $x_1$ and $x_2$ are near $t$ and $u$ respectively.
Thus there exist neighborhoods $U$ and $V$ of $t$ and $u$ such that
\begin{equation}\label{eq:linking_number_localized}
 \link(f_1,f_2)=\int_{U\times V}h^*\eta.
\end{equation}
Since $\link(f_1,f_2)$ is equal to the sign of $p$, we have
\begin{equation}\label{eq:localized_integral}
 \int_{U\times V}h^*\eta=\sgn(p).
\end{equation}
\end{ex}
\begin{figure}
\centering
{\unitlength 0.1in%
\begin{picture}(32.2500,10.0000)(4.0000,-12.2700)%
\put(5.0000,-10.0000){\makebox(0,0)[lb]{$f_1$}}%
\put(15.0000,-10.0000){\makebox(0,0)[rb]{$f_2$}}%
\put(10.0000,-13.0000){\makebox(0,0){$p$}}%
\put(10.0000,-3.0000){\makebox(0,0){$q$}}%
%
\special{pn 8}%
\special{pa 2800 1200}%
\special{pa 3150 850}%
\special{fp}%
\put(28.0000,-12.0000){\makebox(0,0)[rb]{$f_1$}}%
\put(36.2500,-12.0000){\makebox(0,0)[lb]{$f_2$}}%
%
\special{pn 20}%
\special{pa 2900 1100}%
\special{pa 3150 850}%
\special{fp}%
%
\special{pn 20}%
\special{pa 3250 750}%
\special{pa 3500 500}%
\special{fp}%
%
\special{pn 20}%
\special{pa 2900 500}%
\special{pa 3500 1100}%
\special{fp}%
\put(35.0000,-5.2500){\makebox(0,0)[lt]{$f_1(U)$}}%
\put(29.0000,-5.2500){\makebox(0,0)[rt]{$f_2(V)$}}%
\put(31.0000,-12.0000){\makebox(0,0)[lt]{$f_1(x_1)$}}%
%
\special{sh 1.000}%
\special{ia 3000 1000 25 25 0.0000000 6.2831853}%
\special{pn 8}%
\special{ar 3000 1000 25 25 0.0000000 6.2831853}%
%
\special{sh 1.000}%
\special{ia 3100 700 25 25 0.0000000 6.2831853}%
\special{pn 8}%
\special{ar 3100 700 25 25 0.0000000 6.2831853}%
%
\special{pn 4}%
\special{pa 3100 1200}%
\special{pa 3000 1000}%
\special{fp}%
\put(32.0000,-4.0000){\makebox(0,0)[lb]{$f_2(x_2)$}}%
%
\special{pn 4}%
\special{pa 3200 400}%
\special{pa 3100 700}%
\special{fp}%
%
\special{pn 13}%
\special{pa 3000 1000}%
\special{pa 3060 820}%
\special{dt 0.030}%
\special{sh 1}%
\special{pa 3060 820}%
\special{pa 3020 877}%
\special{pa 3043 871}%
\special{pa 3058 890}%
\special{pa 3060 820}%
\special{fp}%
\put(30.0000,-9.0000){\makebox(0,0)[rb]{$h$}}%
%
\special{pn 13}%
\special{pa 3060 820}%
\special{pa 3100 700}%
\special{dt 0.030}%
\put(32.0000,-10.0000){\makebox(0,0){$p$}}%
%
\special{pn 13}%
\special{ar 800 800 400 400 1.1071487 0.9272952}%
%
\special{pn 13}%
\special{ar 1200 800 400 400 4.2487414 4.0688879}%
%
\special{pn 8}%
\special{pa 3250 750}%
\special{pa 3600 400}%
\special{fp}%
%
\special{pn 8}%
\special{pa 2800 400}%
\special{pa 3600 1200}%
\special{fp}%
\end{picture}}%
\caption{The Hopf link}
\label{fig:Hopf}
\end{figure}
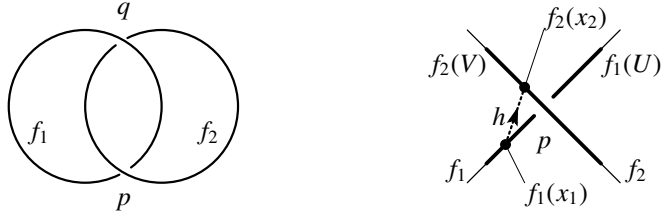

Hereafter we consider similar integrals to \eqref{eq:linking_number} associated to other arrow diagrams or graphs.
Since we use $\eta$,
in many cases such integrals ``localize'' to those near the crossing points,
and then \eqref{eq:localized_integral} converts the computation of the integrals to a combinatorics, namely a counting of certain kinds of crossings.

\subsection{The Casson invariant for long knots}
Let $f$ be a long knot.
Associated to $X$ (Figure~\ref{fig:X_knot}),
consider the configuration space
\begin{equation}
 C_X\coloneqq\{(x_1,\dots,x_4)\in(\R^1)^{\times 4}\mid x_1<\dots<x_4\}
\end{equation}
and the maps $h_{13},h_{42}\colon C_X\to S^2$ defined by
\begin{equation}\label{eq:h_X}
 h_{ij}(x_1,\dots,x_4)\coloneqq\frac{f(x_j)-f(x_i)}{\abs{f(x_j)-f(x_i)}},
 \quad
 (i,j)=(1,3),(4,2).
\end{equation}
We remark that these maps are obviously defined as indicated by the oriented chords of $X$ (Figure~\ref{fig:X_knot}).
We put $h_X\coloneqq h_{13}\times h_{42}\colon C_X\to(S^2)^{\times 2}$ for simplicity, and consider the (possibly diverging) integral
\begin{equation}\label{eq:I_X_knot}
 I_X(f)\coloneqq\int_{C_X}\omega_X,
 \quad
 \omega_{X}\coloneqq h^*_X(\eta\times\eta).
\end{equation}
Next,
associated to the graph $Y$ (Figure~\ref{fig:Y_link}, the left),
consider the configuration space
\begin{equation}
 C_Y(f)\coloneqq\{(x_1,x_2,x_3;x_4)\in(\R^1)^{\times 2}\times\R^3\mid f(x_i)\ne x_4,\ i=1,2,3\},
\end{equation}
and the maps $h_{i4}\colon C_Y(f)\to S^2$ ($i=1,3$) and $h_{42}\colon C_Y(f)\to S^2$ defined by
\begin{equation}
 h_{i4}(x_1,x_2,x_3;x_4)\coloneqq\frac{x_4-f(x_i)}{\abs{x_4-f(x_i)}}
 \quad(i=1,3),
 \quad
 h_{42}(x_1,x_2,x_3;x_4)\coloneqq\frac{f(x_2)-x_4}{\abs{f(x_2)-x_4}}.
\end{equation}
\begin{figure}
\centering
{\unitlength 0.1in%
\begin{picture}(8.0000,3.2100)(4.0000,-6.2100)%
%
\special{pn 13}%
\special{pa 400 600}%
\special{pa 1200 600}%
\special{fp}%
\special{sh 1}%
\special{pa 1200 600}%
\special{pa 1133 580}%
\special{pa 1147 600}%
\special{pa 1133 620}%
\special{pa 1200 600}%
\special{fp}%
%
\special{pn 13}%
\special{pa 800 300}%
\special{pa 800 500}%
\special{fp}%
\special{sh 1}%
\special{pa 800 500}%
\special{pa 820 433}%
\special{pa 800 447}%
\special{pa 780 433}%
\special{pa 800 500}%
\special{fp}%
%
\special{pn 13}%
\special{pa 500 600}%
\special{pa 700 400}%
\special{fp}%
\special{sh 1}%
\special{pa 700 400}%
\special{pa 639 433}%
\special{pa 662 438}%
\special{pa 667 461}%
\special{pa 700 400}%
\special{fp}%
%
\special{pn 13}%
\special{pa 1100 600}%
\special{pa 900 400}%
\special{fp}%
\special{sh 1}%
\special{pa 900 400}%
\special{pa 933 461}%
\special{pa 938 438}%
\special{pa 961 433}%
\special{pa 900 400}%
\special{fp}%
%
\special{pn 13}%
\special{pa 900 400}%
\special{pa 800 300}%
\special{fp}%
%
\special{pn 13}%
\special{pa 700 400}%
\special{pa 800 300}%
\special{fp}%
%
\special{pn 13}%
\special{pa 800 500}%
\special{pa 800 600}%
\special{fp}%
\end{picture}}
{\unitlength 0.1in%
\begin{picture}(8.0000,3.2100)(4.0000,-6.2100)%
%
\special{pn 13}%
\special{pa 400 600}%
\special{pa 600 600}%
\special{fp}%
\special{sh 1}%
\special{pa 600 600}%
\special{pa 533 580}%
\special{pa 547 600}%
\special{pa 533 620}%
\special{pa 600 600}%
\special{fp}%
%
\special{pn 13}%
\special{pa 800 300}%
\special{pa 800 500}%
\special{fp}%
\special{sh 1}%
\special{pa 800 500}%
\special{pa 820 433}%
\special{pa 800 447}%
\special{pa 780 433}%
\special{pa 800 500}%
\special{fp}%
%
\special{pn 13}%
\special{pa 500 600}%
\special{pa 700 400}%
\special{fp}%
\special{sh 1}%
\special{pa 700 400}%
\special{pa 639 433}%
\special{pa 662 438}%
\special{pa 667 461}%
\special{pa 700 400}%
\special{fp}%
%
\special{pn 13}%
\special{pa 1100 600}%
\special{pa 900 400}%
\special{fp}%
\special{sh 1}%
\special{pa 900 400}%
\special{pa 933 461}%
\special{pa 938 438}%
\special{pa 961 433}%
\special{pa 900 400}%
\special{fp}%
%
\special{pn 13}%
\special{pa 900 400}%
\special{pa 800 300}%
\special{fp}%
%
\special{pn 13}%
\special{pa 700 400}%
\special{pa 800 300}%
\special{fp}%
%
\special{pn 13}%
\special{pa 800 500}%
\special{pa 800 600}%
\special{fp}%
%
\special{pn 13}%
\special{pa 700 600}%
\special{pa 900 600}%
\special{fp}%
\special{sh 1}%
\special{pa 900 600}%
\special{pa 833 580}%
\special{pa 847 600}%
\special{pa 833 620}%
\special{pa 900 600}%
\special{fp}%
%
\special{pn 13}%
\special{pa 1000 600}%
\special{pa 1200 600}%
\special{fp}%
\special{sh 1}%
\special{pa 1200 600}%
\special{pa 1133 580}%
\special{pa 1147 600}%
\special{pa 1133 620}%
\special{pa 1200 600}%
\special{fp}%
\end{picture}}%
\caption{The graphs $Y$ and $\Y$}
\label{fig:Y_link}
\end{figure}
These maps are also defined as indicated by the oriented chords of $Y$.
We put $h_Y\coloneqq h_{14}\times h_{42}\times h_{34}\colon C_Y(f)\to(S^2)^{\times 3}$ and define
\begin{equation}\label{eq:I_Y_knot}
 I_Y(f)\coloneqq\int_{C_Y(f)}\omega_Y,
 \quad\text{where}\quad
 \omega_Y\coloneqq h^*_Y(\eta\times\eta\times\eta).
\end{equation}

\begin{prop}\label{prop:convergence_knot}
The integrals $I_X(f)$ and $I_Y(f)$ converge.
\end{prop}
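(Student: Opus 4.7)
The plan is to show that both $\omega_X$ and $\omega_Y$ are smooth forms with compact support in the respective configuration space and remain integrable across the diagonal strata. The compact support of $\eta=\eta_\epsilon^N$ near the north pole $N=(0,0,1)\in S^2$ localizes each integrand, while the standard factorization $f(x_j)-f(x_i)=(x_j-x_i)\,g_{ij}(x_i,x_j)$ with $g_{ij}$ smooth and non-vanishing controls the behavior near the diagonals.

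First I would handle $I_X(f)$. Because $\omega_X=h_{13}^*\eta\wedge h_{42}^*\eta$ and $\eta$ is supported near $N$, the integrand is supported on those $(x_1,\dots,x_4)\in C_X$ for which both secants $f(x_3)-f(x_1)$ and $f(x_2)-f(x_4)$ point within angle $O(\epsilon)$ of the vertical. The long knot condition $f(x)=(x,0,0)$ for $|x|\ge 1$ forces any such secant to be nearly horizontal as soon as one endpoint lies outside $[-1,1]$ and its partner is far from it, so a short case analysis on the four indices confines $\supp\omega_X$ to a bounded subset of $\R^4$. On the diagonals $\{x_1=x_3\}$ and $\{x_2=x_4\}$ the factorization above shows that $h_{13}$ and $h_{42}$ extend smoothly (limiting to the unit tangent of $f$), so $\omega_X$ extends continuously to a compactly supported form on the closure and $I_X(f)$ converges.

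For $I_Y(f)$ the same localization places the three points $x_1,x_2,x_3$ in a bounded subset of $\R$, and further confines the free point $x_4\in\R^3$: the three vectors $x_4-f(x_1)$, $f(x_2)-x_4$, $x_4-f(x_3)$ must all lie within angle $O(\epsilon)$ of vertical, which pinches the height of $x_4$ between those of $f(x_1),f(x_3)$ and $f(x_2)$ and confines its horizontal coordinates to $O(\epsilon)$-neighborhoods of the corresponding coordinates of $f(x_i)$. On the open locus $C_Y(f)$ the integrand is smooth; the potential singularities occur as $x_4\to f(x_i)$. For an isolated collision, set $y=x_4-f(x_i)\in\R^3$; the pullback $h_{i4}^*\eta$ has Cartesian coefficients of order $|y|^{-2}$, which is integrable on $\R^3$, and the other two factors are smooth, so the singularity is harmless. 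For the more delicate hidden face where several collisions coincide---typically $x_4\to f(x_1)$ together with $x_3\to x_1$---I would invoke the Bott-Taubes / Fulton-MacPherson compactification of $C_Y(f)$, on which all direction maps extend smoothly, so that $\omega_Y$ extends to a bounded form on a compact manifold with corners.

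The main obstacle is precisely this last step: the coincident collisions produce borderline $\log$-type scaling under the natural anisotropic dilation $(s,y)\mapsto(\lambda s,\lambda y)$ with $s=x_3-x_1$ and $y=x_4-f(x_1)$. This is either removed by a direct scaling analysis exploiting that $h_{14}^*\eta$ and $h_{34}^*\eta$ must be simultaneously near $N$ (which rigidly pins the direction of the collapsing chord unless $f'$ itself is vertical, a measure-zero event along the stratum), or by appealing to the standard Bott-Taubes extension across the blown-up faces. Either route yields the asserted convergence, and the small support of $\eta$ substantially simplifies the bookkeeping.
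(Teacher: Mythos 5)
Your proposal is correct, and at the decisive point it coincides with the paper's argument: the paper's entire proof is the one\nobreakdash-line observation that the Axelrod--Singer compactifications of $C_X$ and $C_Y(f)$ are compact manifolds with corners onto which the direction maps $h_{ij}$ extend smoothly, so $\omega_X$ and $\omega_Y$ extend to smooth (hence bounded) forms there and their integrals converge. Everything you do before invoking that compactification --- the localization of the support using that $\eta$ is concentrated near $N$ and that secants of a long knot are asymptotically horizontal, the factorization $f(x_j)-f(x_i)=(x_j-x_i)g_{ij}$ across the principal diagonals, and the $\abs{y}^{-2}$ integrability at an isolated collision $x_4\to f(x_i)$ --- is sound but not needed for convergence; it buys an elementary treatment of the noncompact directions and of the simple collisions, which is in the spirit of the localization used later in the paper but is strictly extra work here. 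The one place where you hesitate, the coincident collision $x_3\to x_1$, $x_4\to f(x_1)$ with its apparently borderline scaling, is resolved exactly by the step you name as the alternative: on the blown-up face the maps $h_{14},h_{34},h_{42}$ extend smoothly, so $\omega_Y$ is bounded near that face and the codimension-one locus contributes nothing; no separate log-divergence analysis (and no measure-zero genericity argument about $f'$) is required. In short, your proof is a valid, more hands-on version of the paper's, but its only essential ingredient is the same compactification lemma the paper cites.
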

\begin{proof}
The Axelrod-Singer compactifications \cite{AxelrodSinger94} of $C_X(f)$ and $C_Y(f)$ are manifolds with corners, onto which the maps $h_{ij}$ are smoothly extended.
Thus $\omega_X$ and $\omega_Y$ are defined on the compactified configuration spaces, and their integrals converge.
\end{proof}

\begin{thm}[{\cite{BottTaubes94,Kohno94,BrooksKomendarczyk24}}]\label{thm:CSI_Casson}
$I_X(f)-I_Y(f)$ is an invariant for long knots and is equal to the Casson invariant $c(f)$ of $f$.
\end{thm}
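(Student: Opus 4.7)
The plan has two parts: show that $I_X(f)-I_Y(f)$ is invariant under isotopy of $f$ preserving its asymptotic form, and then identify the resulting invariant with $c(f)$.

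For invariance I would follow the Bott-Taubes/Kohno strategy. Take a smooth one-parameter family $\{f_s\}$ of long knots and differentiate in $s$. Because $\eta$ is closed, both $\omega_X$ and $\omega_Y$ are closed on the respective open configuration spaces, and by Proposition \ref{prop:convergence_knot} they extend smoothly to the Axelrod-Singer compactifications $\overline{C}_X$ and $\overline{C}_Y(f_s)$. Stokes' theorem then reduces $\frac{d}{ds}I_X(f_s)$ and $\frac{d}{ds}I_Y(f_s)$ to sums of integrals over the codimension-one boundary strata.

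Next I would classify those strata into principal faces (exactly two vertices collide), hidden faces (three or more vertices coalesce to a single point), and faces at infinity, and argue that almost all contributions vanish. For $\overline{C}_X$: principal faces along an edge of the graph vanish because the relevant direction map becomes tangent to the knot on the collision locus, so the pulled-back factor of $\eta$ degenerates in a codimension; non-edge principal faces vanish after fibrewise integration using the $\O(2)$-invariance property (ii) of $\eta$; and faces at infinity vanish via the prescribed behaviour $f(x)=(x,0,0)$ for $\abs{x}\ge 1$ together with the localised support (i). The only surviving stratum is the hidden face over which all four knot vertices collapse to one point. A parallel analysis for $\overline{C}_Y(f_s)$ eliminates every face except the principal one where the free point $x_4\in\R^3$ meets the knot; that face is a sphere bundle over the total diagonal, and its fibrewise integral of $\omega_Y$ is designed precisely to reproduce, with the opposite sign, the hidden-face contribution coming from $\overline{C}_X$. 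Hence $\frac{d}{ds}(I_X(f_s)-I_Y(f_s))\equiv 0$, which gives isotopy invariance.

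For the identification with $c(f)$, invariance reduces the problem to diagrams $D$ with only transverse double crossings. Using the localisation of $\eta$ near $N$ as in Example \ref{ex:linking_number}, the integral $I_X(f)$ splits into a sum over unordered pairs of crossings; only pairs realising the arrow pattern $X$ contribute, and each such pair gives $\sgn(\alpha)\sgn(\beta)$ by the same local calculation as \eqref{eq:localized_integral}. A similar localisation handles $I_Y(f)$, and combining yields $I_X(f)-I_Y(f)=\pair{X}{G_D}$, which equals $c(f)$ by Theorem \ref{thm:BrooksKomendarczyk}.

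The principal technical obstacle will be the exact cancellation between the hidden face of $\overline{C}_X$ (four-point collapse) and the principal face of $\overline{C}_Y(f_s)$ (the free point landing on the knot). Identifying the two sphere fibres and matching their integrals on the nose -- not merely up to a universal constant -- requires the $\O(2)$-invariance of $\eta$ in an essential way, and is precisely the place where the explicit Y-graph computation emphasised in the abstract replaces the usual invocation of an anomaly-cancellation argument.
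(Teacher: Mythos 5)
Your overall strategy (Stokes on the compactified configuration spaces, classification of codimension-one faces, then identification with $c$ by localizing to double crossings and invoking the Polyak--Viro formula) is the right one, and the second half of your argument matches what the paper does in \S3.4. However, the boundary analysis in your invariance argument has the cancellation structure inverted, and as written it would fail. You claim that all principal faces of $\overline{C}_X$ vanish (the edge ones by tangency, the non-edge ones by $\O(2)$-invariance), that the only surviving stratum of $\overline{C}_X$ is the anomalous face where all four points collapse, and that this cancels against the face of $\overline{C}_Y(f)$ where the free vertex lands on the knot. None of these three claims is correct. First, $X$ has no principal face along a chord at all: its chords join non-adjacent vertices $1$--$3$ and $4$--$2$, and only consecutive points of an ordered configuration on $\R^1$ can collide in codimension one. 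Second, the genuine principal faces $\{x_1=x_2\}$, $\{x_2=x_3\}$, $\{x_3=x_4\}$ of $\overline{C}_X$ do \emph{not} vanish individually ($\O(2)$-invariance of $\eta$ gives you nothing here); if they did, $I_X$ alone would already be an invariant, which it is not. The actual mechanism --- and the one the paper uses --- is that each of these faces cancels against the principal face of $\overline{C}_Y(f)$ where $x_4$ collides with the corresponding $f(x_i)$, because the formal sum $X-Y$ is a graph cocycle for the edge-contraction coboundary. Third, the faces where three or more points collide, the faces at infinity, and the anomalous face are exactly the ones that vanish, and they vanish by dimensional reasons for these two particular graphs; this is the point the paper stresses, since the usual symmetry argument is unavailable for the non-antipodally-symmetric form $\eta$.

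Consequently the ``principal technical obstacle'' you identify at the end --- matching the four-point-collapse face of $\overline{C}_X$ against a face of $\overline{C}_Y(f)$ on the nose --- is not where the work lies; there is no such matching to perform, because both anomalous contributions vanish separately. What does need care, and what your proposal does not address, is checking that the hidden and infinite faces vanish without appealing to antipodal symmetry of the $2$-form, since the localized $\eta$ chosen here is only $\O(2)$-invariant. With the cancellation reassigned to the principal faces via the cocycle identity $\partial(X-Y)=0$ and the remaining faces killed by dimension counts, the rest of your argument (reduction to double-crossing diagrams, localization as in Example~\ref{ex:linking_number}, $I_Y\to 0$, and $I_X\to\pair{X}{G_D}=c(f)$) goes through as in the paper.
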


Theorem~\ref{thm:CSI_Casson} can be proved in the same way as in \cite{BottTaubes94,Kohno94}, but we have to pay attention to the choice of the volume form $\eta$.

Indeed, \eqref{eq:I_X_knot} and \eqref{eq:I_Y_knot} give rise to $0$-forms on the space of long knots.
By the Stokes' theorem (that is generalized so that it can be applied to integrations along the fiber), the exterior differentials of the $0$-forms $I_*$ ($*=X,Y$) are
\begin{equation}
 dI_*=\int_{C_*}d\omega_*\pm\int_{\partial C_*}\omega_*=\pm\int_{\partial C_*}\omega_*.
\end{equation}
The boundary of (compactified) $C_X$ is stratified, and each codimension one (open) stratum consists of the configurations $(x_1,x_2,x_3,x_4)$ such that
\begin{enumerate}[(i)]
\item
	exactly two $x_i$'s collide,
\item
	three or more $x_i$'s collide, or
\item
	some of $x_i$ ``escape to infinity'' (or ``collide with the point at infinity'').
\end{enumerate}
It can be shown that the integrations over the strata of types (ii) and (iii) vanish, by dimensional reasons (see for example \cite[Appendix A]{CCL02}).
Thus $dI_X$ is equal to the sum of integrals over the type (i) strata.
The situation is similar for $Y$, and the type (i) contributions for $X$ and $Y$ cancel to each other because the formal sum $X-Y$ is a ``graph cocycle'' with respect to the coboundary map defined as the signed sum of the edge-contractions.
Thus we show that $c\coloneqq I_X-I_Y$ is a closed $0$-form on the space of long knots, that is, an isotopy invariant.
Indeed $c$ is the Casson invariant, because $c$ satisfies \eqref{eq:Polyak_formula_for_Casson} that is equal to the formula for the Casson invariant given in \cite{PolyakViro01}.

For general graphs $Z$ other than $X$ and $Y$, the boundary of the (compactified) $C_Z$ has other types of strata, where three or more points collide.
If we use a volume form that is invariant with respect to the antipodal map $S^2\to S^2$, then almost all the integrals over such strata turns out to vanish, by a ``symmetry argument'' (see for example \cite[Appendix A]{CCL02}).
Our $\eta$ is not symmetric and these arguments might fail.
Moreover there exists one more type of strata, the \emph{anomalous strata}, where all the points collide simultaneously.
It is not known whether the integrals over the anomalous strata vanish or not for general graphs, and we need to add some correction terms.
Fortunately, for both $X$ and $Y$, such symmetry arguments as above are not necessary, and the contribution of the anomalous strata can be shown to vanish by dimensional reasons.

\subsection{The Milnor triple linking number}
Let $f=f_1\sqcup f_2\sqcup f_3$ be a 3-component long link.
Associated to $\X_1$ (Figure~\ref{fig:X_link}),
consider the configuration space
\begin{equation}
 C_{\X_1}\coloneqq\{((x_1,x_2),x_3,x_4)\in(\R^1)^{\times 2}\times\R^1\times\R^1\mid x_1<x_2\}.
\end{equation}
We think of $((x_1,x_2),x_3,x_4)\in C_{\X_1}$ as a quadruple of points on $f$, with $x_1,x_2$ on $f_1$, $x_3$ on $f_2$ and $x_4$ on $f_3$.
Define the maps $h_{13},h_{42}\colon C_{\X_1}\to S^2$ as indicated by the oriented chords of $\X_1$ (Figure~\ref{fig:X_link});
\begin{equation}\label{eq:maps_for_X1}
 h_{13}(x_1,\dots,x_4)\coloneqq\frac{f_2(x_3)-f_1(x_1)}{\abs{f_2(x_3)-f_1(x_1)}},
 \quad
 h_{42}(x_1,\dots,x_4)\coloneqq\frac{f_3(x_4)-f_1(x_2)}{\abs{f_3(x_4)-f_1(x_2)}}.
\end{equation}
We put $h_{\X_1}\coloneqq h_{13}\times h_{42}\colon C_{\X_1}\to(S^2)^{\times 2}$ and define
\begin{equation}\label{eq:I_X_1}
 I_{\X_1}(f)\coloneqq\int_{C_{\X_1}}\omega_{\X_1},
 \quad\text{where}\quad
 \omega_{\X_1}\coloneqq h_{\X_1}^*(\eta\times\eta).
\end{equation}
Similarly,
associated to $\X_2$ and $\X_3$ (Figure~\ref{fig:X_link}),
consider the configuration spaces
\begin{align}
 C_{\X_2}&\coloneqq\{(x_1,(x_2,x_3),x_4)\in\R^1\times(\R^1)^{\times 2}\times\R^1\mid x_2<x_3\},\\
 C_{\X_3}&\coloneqq\{(x_1,x_2,(x_3,x_4))\in\R^1\times\R^1\times(\R^1)^{\times 2}\mid x_3<x_4\},
\end{align}
and define the maps $h_{12},h_{43}\colon C_{\X_2}\to S^2$ and $h_{13},h_{42}\colon C_{\X_3}\to S^2$ as indicated by the oriented chords of $\X_2$ and $\X_3$ (Figure~\ref{fig:X_link}).
We put
$h_{\X_2}\coloneqq h_{12}\times h_{43}\colon C_{\X_2}\to(S^2)^{\times 2}$
and
$h_{\X_3}\coloneqq h_{13}\times h_{42}\colon C_{\X_3}\to(S^2)^{\times 2}$,
and define
\begin{equation}\label{eq:I_X_23}
 I_{\X_k}(f)\coloneqq\int_{C_{\X_k}}\omega_{\X_k},
 \quad\text{where}\quad
 \omega_{\X_k}\coloneqq h^*_{\X_k}(\eta\times\eta)
 \quad(k=2,3).
\end{equation}
Finally,
associated to the graph $\Y$ (Figure \ref{fig:Y_link}, the right),
consider the configuration space
\begin{equation}
 C_{\Y}(f)\coloneqq\{(x_1,x_2,x_3;x_4)\in(\R^1)^{\times 3}\times\R^3\mid f_k(x_k)\ne x_4\ (k=1,2,3)\}
\end{equation}
and define the maps $h_{i4}\colon C_{\Y}(f)\to S^2$ ($i=1,3$) and $h_{42}\colon C_{\Y}(f)\to S^2$ as indicated by the oriented chords of $\Y$ (Figure~\ref{fig:Y_link}).
We put $h_{\Y}\coloneqq h_{14}\times h_{42}\times h_{34}\colon C_{\Y}(f)\to(S^2)^{\times 3}$ and define
\begin{equation}\label{eq:I_Y}
 I_{\Y}(f)\coloneqq\int_{C_{\Y}(f)}\omega_{\Y},
 \quad\text{where}\quad
 \omega_{\Y}\coloneqq h^*_{\Y}(\eta\times\eta\times\eta).
\end{equation}

\begin{prop}\label{prop:convergence_link}
The integrals $I_{\X_k}(f)$ ($k=1,2,3$) and $I_{\Y}(f)$ converge.
\end{prop}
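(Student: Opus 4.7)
The plan is to mimic, essentially verbatim, the argument for Proposition~\ref{prop:convergence_knot}. The integrands $\omega_{\X_k}$ and $\omega_{\Y}$ are smooth differential forms on open configuration spaces, and the only issue that could cause the integrals to diverge is the behavior of the maps $h_{ij}$ near the loci where configuration points collide or escape to infinity. The standard remedy is to pass to the Axelrod--Singer (or Fulton--MacPherson) compactification, on which these Gauss maps extend smoothly, so that the integrands become smooth forms on a compact manifold with corners.

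Concretely, I would first recall that each $C_{\X_k}$ is an open subset of a Euclidean space $(\R^1)^{\times 4}$ obtained by requiring a single ordering inequality, and $C_{\Y}(f)$ is an open subset of $(\R^1)^{\times 3}\times\R^3$ obtained by removing three closed subsets. The maps $h_{ij}$ in \eqref{eq:maps_for_X1}--\eqref{eq:I_Y} have the same form as the maps appearing in the knot case \eqref{eq:h_X}: each is a normalized difference of an evaluation of some $f_k$ or the extra $\R^3$-point. The only places where smoothness of such a map fails are where the numerator vanishes, and the only places where the \emph{compactification} does not trivially extend $h_{ij}$ are the colliding-point strata and the strata at infinity.

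Next I would invoke the Axelrod--Singer compactification $\overline{C_{\X_k}}$ (resp.\ $\overline{C_{\Y}(f)}$) of $C_{\X_k}$ (resp.\ $C_{\Y}(f)$), which is a compact manifold with corners. The key property (proved in \cite{AxelrodSinger94}; see also \cite{BottTaubes94,CCL02}) is that each Gauss map $h_{ij}$ extends smoothly from the open configuration space to the compactification, because the collision loci get blown up and the direction of approach becomes a well-defined boundary coordinate, while the faces at infinity are handled by stereographic compactification of the link components and $\R^3$ respectively, to which the $f_k$ and the identity map extend smoothly (using that $f_k$ is linear outside $\abs{x}\ge 1$). Since $\eta$ is a smooth $2$-form on $S^2$, the pullbacks $\omega_{\X_k}=h_{\X_k}^*(\eta\times\eta)$ and $\omega_{\Y}=h_{\Y}^*(\eta\times\eta\times\eta)$ extend to smooth forms on the compact $\overline{C_{\X_k}}$ and $\overline{C_{\Y}(f)}$, of which the open configuration spaces are dense open subsets of full measure. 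Therefore
\begin{equation}
 I_{\X_k}(f)=\int_{\overline{C_{\X_k}}}\omega_{\X_k},
 \qquad
 I_{\Y}(f)=\int_{\overline{C_{\Y}(f)}}\omega_{\Y}
\end{equation}
converge as integrals of smooth forms on compact manifolds with corners.

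The only mildly delicate point is checking smoothness of the extensions at the face at infinity, since unlike a closed knot the long link components are non-compact; but the long-link convention $f_k(x)=(x,(2-k)\abs{x},0)$ for $\abs{x}\ge 1$ makes each $f_k$ proper and linear at infinity, so after the one-point compactification of each $\R^1$ and of $\R^3$, all $h_{ij}$ remain smooth at the added boundary. This is the only step that requires genuine care; everything else is formal from \cite{AxelrodSinger94}.
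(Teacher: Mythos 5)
Your proposal is correct and follows exactly the paper's own argument: both reduce convergence to the existence of the Axelrod--Singer compactifications of $C_{\X_k}$ and $C_{\Y}(f)$, onto which the Gauss maps $h_{ij}$ extend smoothly, so that $\omega_{\X_k}$ and $\omega_{\Y}$ become smooth forms on compact manifolds with corners. Your additional remarks on the behavior at infinity (using the linearity of $f_k$ outside $\abs{x}\ge 1$) merely flesh out a point the paper leaves implicit.
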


This is because, similar to Proposition~\ref{prop:convergence_knot}, the Axelrod-Singer compactifications \cite{AxelrodSinger94} for $C_{\X_k}$ ($k=1,2,3$) and $C_{\Y}$ also exist.

\begin{thm}[\cite{Koytcheff14}]\label{thm:CSI_triple_linking}
$\mu(f)\coloneqq I_{\X_1}(f)+I_{\X_2}(f)+I_{\X_3}(f)-I_{\Y}(f)$ is equal to the Milnor triple linking number $\mu_{123}(f)$ of $f$.
\end{thm}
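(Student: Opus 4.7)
The plan is to follow the same three-step strategy outlined for Theorem~\ref{thm:CSI_Casson}, with the formal combination $\X_1+\X_2+\X_3-\Y$ now in place of $X-Y$. Viewing each $I_{\X_k}(f)$ and $I_{\Y}(f)$ as a $0$-form on the space of 3-component long links and applying the generalized Stokes' theorem (for integration along the fibre over the Axelrod-Singer compactifications $\overline{C_{\X_k}}$ and $\overline{C_{\Y}(f)}$), the question of isotopy invariance of $\mu(f)$ reduces to showing
\begin{equation}
 d\mu(f)=\sum_{k=1}^{3}\pm\int_{\partial\overline{C_{\X_k}}}\omega_{\X_k}-\int_{\partial\overline{C_{\Y}(f)}}\omega_{\Y}=0.
\end{equation}

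The codimension-one boundary strata fall into four types: (i) \emph{principal faces}, on which exactly two configuration points collide; (ii) \emph{hidden faces}, on which three or more points collide; (iii) \emph{faces at infinity}; and (iv) the \emph{anomalous face}. For types (ii)--(iv), the map $h_*$ factors through a space of strictly lower dimension than $\deg\omega_*$, and the pulled-back form vanishes on the stratum by a dimensional count analogous to \cite[Appendix~A]{CCL02}. A subtlety is that our $\eta$ is not antipodally invariant, so the standard ``kill-by-involution'' arguments that handle certain hidden faces in the general theory are unavailable; hence each such stratum has to be verified individually for the specific graphs $\X_k$ and $\Y$. This will be the main technical obstacle, though the simplicity of these graphs should make the dimensional arguments go through.

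For type (i), the principal face contributions match up combinatorially. The only parameter--parameter collision available in $C_{\X_k}$ is the collision of the two vertices lying on $f_k$, and the resulting stratum is canonically identified with the boundary of $\overline{C_{\Y}(f)}$ on which the free vertex $x_4$ collides with $f_k(x_k)$. A sign bookkeeping will show that these faces appear with opposite orientations in the combination $\sum I_{\X_k}-I_{\Y}$ and so cancel pairwise; no further principal faces of $\overline{C_{\Y}(f)}$ occur, since the parameters $x_1$, $x_2$, $x_3$ lie on mutually distinct components $f_1$, $f_2$, $f_3$. This is the combinatorial statement that $\X_1+\X_2+\X_3-\Y$ is a cocycle in the relevant graph complex, from which $d\mu(f)=0$ follows.

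Once invariance is established, the identification $\mu(f)=\mu_{123}(f)$ can be carried out as in \cite{Koytcheff14}: evaluate both sides on a convenient representative such as a tangle version of the Borromean rings, for which $\mu_{123}=\pm 1$ is evident and the configuration space integrals can be computed directly, and combine this with the fact that both invariants vanish on split 3-component long links.
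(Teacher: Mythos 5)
Your proposal follows essentially the same route as the paper, which itself defers to \cite{Koytcheff14}: Stokes' theorem over the Axelrod--Singer compactification, cancellation of principal faces via the cocycle condition on $\X_1+\X_2+\X_3-\Y$, case-by-case vanishing of the remaining faces since the non-symmetric $\eta$ rules out the usual involution arguments, and identification with $\mu_{123}$ by reference to known computations. The only small difference is that the paper disposes of your type (iv) more cheaply, observing that no anomalous face exists at all for $\X_k$ and $\Y$ because the four configuration points lie on distinct components and hence cannot all collide.
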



The proof of Theorem~\ref{thm:CSI_triple_linking} is the same as that in \cite{Koytcheff14}.
Similarly to the case of the Casson invariant,
we need to check that no symmetry argument is needed.
The proof is easier in this case than in the case of the Casson invariant; there exist no anomalous faces for $\X_k$'s and $\Y$.
This is because not all the four configuration points are on a single component and hence not all of them can collide.

\subsection{Proof of \eqref{eq:Polyak_formula_for_triple_linking}, the first half of Theorem~\ref{thm:main}}\label{ss:double_contribution_X}
The Polyak formulas \eqref{eq:Polyak_formula_for_Casson} and \eqref{eq:Polyak_formula_for_triple_linking} can be proved in parallel ways,
so we only give a proof of \eqref{eq:Polyak_formula_for_triple_linking}.
The proof below almost repeats verbatim that for \eqref{eq:Polyak_formula_for_Casson} in \cite{BrooksKomendarczyk24}.

Let us compute $I_{\X_1}(f)$ for a 3-component long link $f$ whose projection onto $N^{\perp}$ is a usual link diagram $D$, namely $D$ has only transeverse double crossings.

Let $p_1,p_2,\dots$ be the crossings of $D$ that involve $f_1$ and $f_2$ with arcs of $f_2$ as the over-arcs.
Similarly let $q_1,q_2,\dots$ be the crossings of $D$ that involve $f_1$ and $f_3$ with arcs of $f_1$ as their over-arcs.
Suppose $p_i=\pr_{12}(f_1(t_i))=\pr_{12}(f_3(u_i))$ and $q_j=\pr_{12}(f_1(t'_j))=\pr_{12}(f_2(u'_j))$.
Then, if $\epsilon$ is sufficiently close to $0$, the image $h_{\X_1}((x_1,x_2),x_3,x_4)$ of $((x_1,x_2),x_3,x_4)\in C_{\X_1}$ via the map $h_{\X_1}$ (the product of the maps in \eqref{eq:maps_for_X1}) is in $\supp(\eta)^{\times 2}$ only if $x_1$ and $x_3$ are near respectively $t_i$ and $u_i$ for some $i$, and $x_2$ and $x_4$ are near respectively $t'_j$ and $u'_j$ for some $j$, with $t_i<t'_j$ (Figure~\ref{fig:localized_X}).
\begin{figure}
\centering
{\unitlength 0.1in%
\begin{picture}(41.0000,8.7200)(2.0000,-10.2700)%
%
\special{pn 13}%
\special{pa 200 600}%
\special{pa 500 600}%
\special{fp}%
\special{sh 1}%
\special{pa 500 600}%
\special{pa 433 580}%
\special{pa 447 600}%
\special{pa 433 620}%
\special{pa 500 600}%
\special{fp}%
%
\special{pn 13}%
\special{pa 700 600}%
\special{pa 1000 600}%
\special{fp}%
\special{sh 1}%
\special{pa 1000 600}%
\special{pa 933 580}%
\special{pa 947 600}%
\special{pa 933 620}%
\special{pa 1000 600}%
\special{fp}%
%
\special{pn 13}%
\special{pa 1200 600}%
\special{pa 1500 600}%
\special{fp}%
\special{sh 1}%
\special{pa 1500 600}%
\special{pa 1433 580}%
\special{pa 1447 600}%
\special{pa 1433 620}%
\special{pa 1500 600}%
\special{fp}%
%
\special{pn 13}%
\special{ar 500 600 250 250 4.7123890 6.2831853}%
%
\special{pn 13}%
\special{ar 900 600 500 300 4.7123890 6.2831853}%
%
\special{pn 13}%
\special{pa 919 300}%
\special{pa 900 300}%
\special{fp}%
\special{sh 1}%
\special{pa 900 300}%
\special{pa 967 320}%
\special{pa 953 300}%
\special{pa 967 280}%
\special{pa 900 300}%
\special{fp}%
%
\special{pn 13}%
\special{ar 500 600 250 250 3.1415927 4.7123890}%
%
\special{pn 13}%
\special{pa 485 350}%
\special{pa 500 350}%
\special{fp}%
\special{sh 1}%
\special{pa 500 350}%
\special{pa 433 330}%
\special{pa 447 350}%
\special{pa 433 370}%
\special{pa 500 350}%
\special{fp}%
%
\special{pn 13}%
\special{ar 900 600 500 300 3.1415927 4.7123890}%
\put(8.5000,-9.0000){\makebox(0,0){$\X_1$}}%
%
\special{sh 1.000}%
\special{ia 2400 825 25 25 0.0000000 6.2831853}%
\special{pn 13}%
\special{ar 2400 825 25 25 0.0000000 6.2831853}%
%
\special{pn 13}%
\special{pa 1900 400}%
\special{pa 2700 200}%
\special{fp}%
%
\special{pn 13}%
\special{pa 1900 700}%
\special{pa 2700 900}%
\special{fp}%
%
\special{pn 8}%
\special{pa 2300 300}%
\special{pa 2300 1000}%
\special{dt 0.030}%
%
\special{sh 1.000}%
\special{ia 2200 325 25 25 0.0000000 6.2831853}%
\special{pn 13}%
\special{ar 2200 325 25 25 0.0000000 6.2831853}%
\put(27.0000,-9.0000){\makebox(0,0)[lt]{$f_1$}}%
\put(27.0000,-2.0000){\makebox(0,0)[lt]{$f_2$}}%
%
\special{sh 1.000}%
\special{ia 2300 1000 25 25 0.0000000 6.2831853}%
\special{pn 13}%
\special{ar 2300 1000 25 25 0.0000000 6.2831853}%
\put(23.0000,-11.0000){\makebox(0,0){$p_i$}}%
%
\special{pn 13}%
\special{pa 2400 825}%
\special{pa 2300 575}%
\special{dt 0.030}%
\special{sh 1}%
\special{pa 2300 575}%
\special{pa 2306 644}%
\special{pa 2320 625}%
\special{pa 2343 629}%
\special{pa 2300 575}%
\special{fp}%
\put(22.7500,-5.0000){\makebox(0,0)[rt]{$h_{13}$}}%
\put(24.2500,-8.0000){\makebox(0,0)[lb]{$f_1(x_1)$}}%
\put(21.7500,-3.0000){\makebox(0,0)[rb]{$f_2(x_3)$}}%
\put(2.5000,-7.0000){\makebox(0,0){1}}%
\put(4.0000,-7.0000){\makebox(0,0){2}}%
\put(7.5000,-7.0000){\makebox(0,0){3}}%
\put(14.0000,-7.0000){\makebox(0,0){4}}%
%
\special{pn 13}%
\special{pa 2300 575}%
\special{pa 2200 325}%
\special{dt 0.030}%
%
\special{pn 13}%
\special{pa 3500 400}%
\special{pa 4300 200}%
\special{fp}%
%
\special{pn 13}%
\special{pa 3500 700}%
\special{pa 4300 900}%
\special{fp}%
%
\special{pn 8}%
\special{pa 3900 300}%
\special{pa 3900 1000}%
\special{dt 0.030}%
%
\special{sh 1.000}%
\special{ia 3900 1000 25 25 0.0000000 6.2831853}%
\special{pn 13}%
\special{ar 3900 1000 25 25 0.0000000 6.2831853}%
\put(39.0000,-11.0000){\makebox(0,0){$q_j$}}%
%
\special{sh 1.000}%
\special{ia 3800 775 25 25 0.0000000 6.2831853}%
\special{pn 13}%
\special{ar 3800 775 25 25 0.0000000 6.2831853}%
%
\special{sh 1.000}%
\special{ia 3800 325 25 25 0.0000000 6.2831853}%
\special{pn 13}%
\special{ar 3800 325 25 25 0.0000000 6.2831853}%
%
\special{pn 13}%
\special{pa 3800 750}%
\special{pa 3800 500}%
\special{dt 0.030}%
\special{sh 1}%
\special{pa 3800 500}%
\special{pa 3780 567}%
\special{pa 3800 553}%
\special{pa 3820 567}%
\special{pa 3800 500}%
\special{fp}%
%
\special{pn 13}%
\special{pa 3800 500}%
\special{pa 3800 350}%
\special{dt 0.030}%
\put(43.0000,-2.0000){\makebox(0,0)[lt]{$f_1$}}%
\put(43.0000,-9.0000){\makebox(0,0)[lt]{$f_3$}}%
\put(37.7500,-3.0000){\makebox(0,0)[rb]{$f_1(x_2)$}}%
\put(37.7500,-8.0000){\makebox(0,0)[rt]{$f_3(x_4)$}}%
\put(36.5000,-5.5000){\makebox(0,0){$h_{42}$}}%
\end{picture}}%
\caption{}
\label{fig:localized_X}
\end{figure}
Thus, there exist disjoint neighborhoods $U_i^1$, $U_i^3$, $V_j^2$ and $V_j^4$ of respectively $t_i$, $u_i$, $t'_j$ and $u'_j$ such that
\begin{equation}\label{eq:I_X_1_localized_to_doublepoints}
 \lim_{\epsilon\to 0}I_{\X_1}(f)
 =\sum_{\substack{i,j \\ t_i<t'_j}}\int_{U_i^1\times V_j^2\times U_i^3\times V_j^4}\omega_{\X_1}
 =\sum_{\substack{i,j \\ t_i<t'_j}}\int_{U_i^1\times U_i^3}h^*_{13}\eta\int_{V_j^4\times V_j^2}h^*_{42}\eta,
\end{equation}
here we permute the coordinates as $(x_2,x_3,x_4)\mapsto(x_3,x_4,x_2)$, whose sign is $+1$.
The integrations in the most right hand side of \eqref{eq:I_X_1_localized_to_doublepoints} are nothing but the right hand side of \eqref{eq:linking_number_localized} in Example~\ref{ex:linking_number},
and are equal to the signs of the crossings $p_i$ and $q_j$.
The chords corresponding to $p_i$ and $q_j$ with $t_i<t'_j$ obviously form a subdiagram of $G_D$ isomorphic to $\X_1$.
Thus
\begin{equation}\label{eq:I_X_1_doublepoint_limit}
 \lim_{\epsilon\to 0}I_{\X_1}(f)=\sum_{\substack{i,j \\ t_i<t'_j}}\sgn(p_i)\sgn(q_j)=\pair{\X_1}{G_D}.
\end{equation}
In the same way we see that
\begin{equation}\label{eq:I_X_23_doublepoint_limit}
 \lim_{\epsilon\to 0}I_{\X_k}(f)=\pair{\X_k}{G_D},
 \quad
 k=2,3.
\end{equation}
As for $\Y$, the image of $(x_1,x_2,x_3;x_4)\in C_{\Y}(f)$ via $h_{\Y}$ is in $(\supp(\eta))^{\times 3}$ only if
\begin{itemize}
\item
	$\pr_{12}(f_i(x_i))$ ($i=1,2,3$) and $\pr_{12}(x_4)$ are sufficiently near, and
\item
	$f_i(x_i)$ sits ``below'' (resp.~``above'') $x_4$ if $i=1,3$ (resp.~$i=2$)
\end{itemize}
(see Figure~\ref{fig:localized_Y}).
\begin{figure}
\centering
{\unitlength 0.1in%
\begin{picture}(23.0000,8.7300)(0.0000,-11.0000)%
%
\special{pn 13}%
\special{pa 1500 400}%
\special{pa 2300 400}%
\special{fp}%
%
\special{pn 13}%
\special{pa 1500 600}%
\special{pa 2300 800}%
\special{fp}%
%
\special{pn 13}%
\special{pa 1500 1100}%
\special{pa 2300 900}%
\special{fp}%
\put(23.0000,-4.0000){\makebox(0,0)[lb]{$f_2$}}%
\put(23.0000,-8.0000){\makebox(0,0)[lb]{$f_1$}}%
\put(23.0000,-9.0000){\makebox(0,0)[lt]{$f_3$}}%
%
\special{sh 1.000}%
\special{ia 1900 400 25 25 0.0000000 6.2831853}%
\special{pn 8}%
\special{ar 1900 400 25 25 0.0000000 6.2831853}%
%
\special{sh 1.000}%
\special{ia 2000 600 25 25 0.0000000 6.2831853}%
\special{pn 8}%
\special{ar 2000 600 25 25 0.0000000 6.2831853}%
%
\special{sh 1.000}%
\special{ia 1900 1000 25 25 0.0000000 6.2831853}%
\special{pn 8}%
\special{ar 1900 1000 25 25 0.0000000 6.2831853}%
%
\special{sh 1.000}%
\special{ia 2100 750 25 25 0.0000000 6.2831853}%
\special{pn 8}%
\special{ar 2100 750 25 25 0.0000000 6.2831853}%
%
\special{pn 13}%
\special{pa 1990 580}%
\special{pa 1900 420}%
\special{dt 0.030}%
\special{sh 1}%
\special{pa 1900 420}%
\special{pa 1915 488}%
\special{pa 1926 466}%
\special{pa 1950 468}%
\special{pa 1900 420}%
\special{fp}%
%
\special{pn 13}%
\special{pa 1905 980}%
\special{pa 1950 800}%
\special{dt 0.030}%
\special{sh 1}%
\special{pa 1950 800}%
\special{pa 1914 860}%
\special{pa 1937 852}%
\special{pa 1953 870}%
\special{pa 1950 800}%
\special{fp}%
%
\special{pn 13}%
\special{pa 1950 800}%
\special{pa 1990 640}%
\special{dt 0.030}%
%
\special{pn 13}%
\special{pa 2085 740}%
\special{pa 2010 615}%
\special{dt 0.030}%
\special{sh 1}%
\special{pa 2010 615}%
\special{pa 2027 682}%
\special{pa 2037 661}%
\special{pa 2061 662}%
\special{pa 2010 615}%
\special{fp}%
\put(19.0000,-3.0000){\makebox(0,0){$f(x_2)$}}%
\put(19.0000,-11.0000){\makebox(0,0){$f(x_3)$}}%
\put(23.0000,-6.0000){\makebox(0,0)[lb]{$f(x_1)$}}%
%
\special{pn 8}%
\special{pa 2300 600}%
\special{pa 2120 735}%
\special{fp}%
\put(20.2500,-5.7500){\makebox(0,0)[lb]{$x_4$}}%
%
\special{pn 13}%
\special{pa 0 900}%
\special{pa 300 900}%
\special{fp}%
\special{sh 1}%
\special{pa 300 900}%
\special{pa 233 880}%
\special{pa 247 900}%
\special{pa 233 920}%
\special{pa 300 900}%
\special{fp}%
%
\special{pn 13}%
\special{pa 500 900}%
\special{pa 800 900}%
\special{fp}%
\special{sh 1}%
\special{pa 800 900}%
\special{pa 733 880}%
\special{pa 747 900}%
\special{pa 733 920}%
\special{pa 800 900}%
\special{fp}%
%
\special{pn 13}%
\special{pa 1000 900}%
\special{pa 1300 900}%
\special{fp}%
\special{sh 1}%
\special{pa 1300 900}%
\special{pa 1233 880}%
\special{pa 1247 900}%
\special{pa 1233 920}%
\special{pa 1300 900}%
\special{fp}%
%
\special{pn 13}%
\special{pa 150 900}%
\special{pa 400 650}%
\special{fp}%
\special{sh 1}%
\special{pa 400 650}%
\special{pa 339 683}%
\special{pa 362 688}%
\special{pa 367 711}%
\special{pa 400 650}%
\special{fp}%
%
\special{pn 13}%
\special{pa 400 650}%
\special{pa 650 400}%
\special{fp}%
%
\special{pn 13}%
\special{pa 650 400}%
\special{pa 650 650}%
\special{fp}%
\special{sh 1}%
\special{pa 650 650}%
\special{pa 670 583}%
\special{pa 650 597}%
\special{pa 630 583}%
\special{pa 650 650}%
\special{fp}%
%
\special{pn 13}%
\special{pa 650 650}%
\special{pa 650 900}%
\special{fp}%
%
\special{pn 13}%
\special{pa 1150 900}%
\special{pa 900 650}%
\special{fp}%
\special{sh 1}%
\special{pa 900 650}%
\special{pa 933 711}%
\special{pa 938 688}%
\special{pa 961 683}%
\special{pa 900 650}%
\special{fp}%
%
\special{pn 13}%
\special{pa 900 650}%
\special{pa 650 400}%
\special{fp}%
\put(6.5000,-11.0000){\makebox(0,0){$\Y$}}%
\end{picture}}%
\caption{}
\label{fig:localized_Y}
\end{figure}
Thus $\supp(\omega_{\Y})$ is contained in the subspace of $(x_1,x_2,x_3;x_4)\in C_Y(f)$ with $f_i(x_i)$ ($i=1,2,3$) ``near a triple point of the diagram $D$'', and is empty if $\epsilon$ is sufficiently small, since $D$ has no triple point.
This implies
\begin{equation}\label{eq:I_Y_doublepoint_limit}
 \lim_{\epsilon\to 0}I_{\Y}(f)=0.
\end{equation}
\eqref{eq:I_X_1_doublepoint_limit}, \eqref{eq:I_X_23_doublepoint_limit} and \eqref{eq:I_Y_doublepoint_limit} complete the proof of \eqref{eq:Polyak_formula_for_triple_linking}.

Almost the same argument proves \eqref{eq:Polyak_formula_for_Casson}, just replacing $\X_k$ and $\Y$ respectively with $X_k$ and $Y$.

\section{Polyak type formula for Milnor triple linking number from link diagrams with multiple crossings}\label{s:CSI_multiple_crossings}
Below we suppose that diagrams of long knots and links may have transverse multiple (triple or more) crossings.
The pairs of double crossings that do not form triple or quadruple points contribute to the Casson invariant and the Milnor triple linking number by the product of the signs of these crossings, as we see in \S\ref{ss:double_contribution_X}.
In this case triple or more crossings may also contribute to these invaraints, because $\supp(\omega_*)$ contains configurations as shown in Figures~\ref{fig:triple_contribution_X}, \ref{fig:triple_contribution_Y}, where all the four points get togather near a triple point.
We compute these contributions in the limit $\epsilon\to 0$.
\begin{figure}
\centering
\input{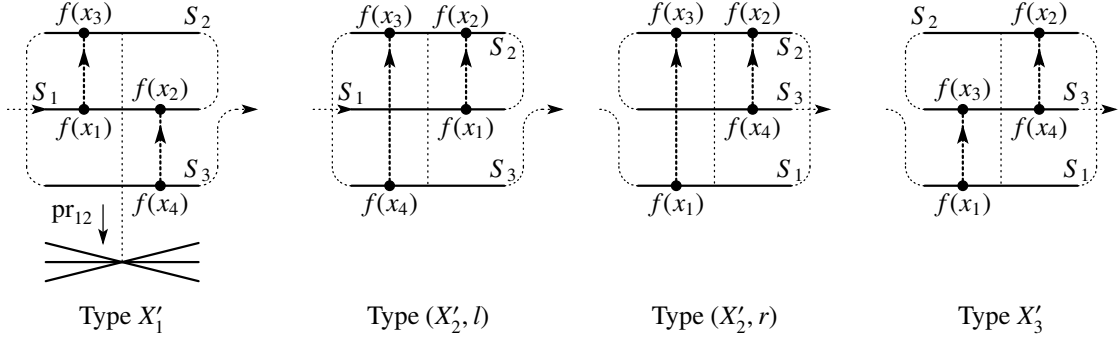}
\caption{Configurations that can contribute to $I_X$ and $I_{\X_k}$}
\label{fig:triple_contribution_X}
\end{figure}
\begin{figure}
\centering
\input{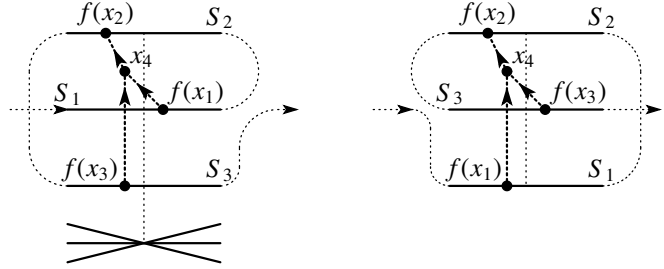}
\caption{Configurations that can contribute to $I_Y$ and $I_{\Y}$}
\label{fig:triple_contribution_Y}
\end{figure}

\subsection{A ``model'' for triple points}\label{ss:model_triple}
Consider three disjoint small straight segments $S_1$, $S_2$ and $S_3$ in $\R^3$ that are parallel to $N^{\perp}$ and whose projections onto $N^{\perp}$ make a transeverse triple point at their midpoints.
We regard these segments as
\begin{itemize}
\item
	subarcs of a long knot $f$ such that, if we follow $f$ along its orientation, we pass through these segments in order of $S_1$, $S_2$ and then $S_3$, or
\item
	subarcs of a long link $f$ with $S_k$ on the $k$-th component $f_k$ ($k=1,2,3$) of $f$.
\end{itemize}
To compute the contribution of triple points to $I_X$ and $I_{\X_k}$,
we will consider configurations of four points on the segments $S_k$ and the maps $h_{ij}$ defined similaly to \eqref{eq:h_X} and \eqref{eq:maps_for_X1},
then compute the integral defined similarly to \eqref{eq:I_X_knot} and \eqref{eq:I_X_1}.
The contribution of triple points to $I_Y$ and $I_{\Y}$ will be computed in similar way.

We only need to consider the case where $S_2$ is ``above'' both $S_1$ and $S_3$, since otherwise $\supp(\omega_*)=\varnothing$ ($*=X,Y$, see Figures~\ref{fig:triple_contribution_X}, \ref{fig:triple_contribution_Y}).
In such cases the triple point corresponds to one of the Gauss diagrams in Figure~\ref{fig:all_triple_arrow}.
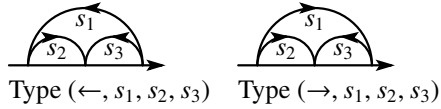
\begin{figure}
\centering
{\unitlength 0.1in%
\begin{picture}(20.0000,3.5500)(5.0000,-6.5500)%
%
\special{pn 13}%
\special{pa 500 600}%
\special{pa 1300 600}%
\special{fp}%
\special{sh 1}%
\special{pa 1300 600}%
\special{pa 1233 580}%
\special{pa 1247 600}%
\special{pa 1233 620}%
\special{pa 1300 600}%
\special{fp}%
%
\special{pn 13}%
\special{ar 900 600 300 300 4.7123890 6.2831853}%
%
\special{pn 13}%
\special{pa 915 300}%
\special{pa 900 300}%
\special{fp}%
\special{sh 1}%
\special{pa 900 300}%
\special{pa 967 320}%
\special{pa 953 300}%
\special{pa 967 280}%
\special{pa 900 300}%
\special{fp}%
%
\special{pn 13}%
\special{ar 900 600 300 300 3.1415927 4.7123890}%
%
\special{pn 13}%
\special{ar 750 600 150 150 4.7123890 6.2831853}%
%
\special{pn 13}%
\special{ar 750 600 150 150 3.1415927 4.7123890}%
%
\special{pn 13}%
\special{pa 735 451}%
\special{pa 750 450}%
\special{fp}%
\special{sh 1}%
\special{pa 750 450}%
\special{pa 682 434}%
\special{pa 697 454}%
\special{pa 685 474}%
\special{pa 750 450}%
\special{fp}%
%
\special{pn 13}%
\special{ar 1050 600 150 150 4.7123890 6.2831853}%
%
\special{pn 13}%
\special{pa 1065 451}%
\special{pa 1050 450}%
\special{fp}%
\special{sh 1}%
\special{pa 1050 450}%
\special{pa 1115 474}%
\special{pa 1103 454}%
\special{pa 1118 434}%
\special{pa 1050 450}%
\special{fp}%
%
\special{pn 13}%
\special{ar 1050 600 150 150 3.1415927 4.7123890}%
\put(9.0000,-3.7500){\makebox(0,0){$s_1$}}%
\put(7.5000,-5.2500){\makebox(0,0){$s_2$}}%
\put(10.5000,-5.2500){\makebox(0,0){$s_3$}}%
%
\special{pn 13}%
\special{pa 1700 600}%
\special{pa 2500 600}%
\special{fp}%
\special{sh 1}%
\special{pa 2500 600}%
\special{pa 2433 580}%
\special{pa 2447 600}%
\special{pa 2433 620}%
\special{pa 2500 600}%
\special{fp}%
\put(5.0000,-8.0000){\makebox(0,0)[lb]{Type $(\leftarrow,s_1,s_2,s_3)$}}%
%
\special{pn 13}%
\special{ar 2100 600 300 300 4.7123890 6.2831853}%
%
\special{pn 13}%
\special{ar 2100 600 300 300 3.1415927 4.7123890}%
%
\special{pn 13}%
\special{pa 2085 300}%
\special{pa 2100 300}%
\special{fp}%
\special{sh 1}%
\special{pa 2100 300}%
\special{pa 2033 280}%
\special{pa 2047 300}%
\special{pa 2033 320}%
\special{pa 2100 300}%
\special{fp}%
%
\special{pn 13}%
\special{ar 1950 600 150 150 3.1415927 4.7123890}%
%
\special{pn 13}%
\special{pa 1935 451}%
\special{pa 1950 450}%
\special{fp}%
\special{sh 1}%
\special{pa 1950 450}%
\special{pa 1882 434}%
\special{pa 1897 454}%
\special{pa 1885 474}%
\special{pa 1950 450}%
\special{fp}%
%
\special{pn 13}%
\special{ar 1950 600 150 150 4.7123890 6.2831853}%
%
\special{pn 13}%
\special{ar 2250 600 150 150 3.1415927 4.7123890}%
%
\special{pn 13}%
\special{ar 2250 600 150 150 4.7123890 6.2831853}%
%
\special{pn 13}%
\special{pa 2265 451}%
\special{pa 2250 450}%
\special{fp}%
\special{sh 1}%
\special{pa 2250 450}%
\special{pa 2315 474}%
\special{pa 2303 454}%
\special{pa 2318 434}%
\special{pa 2250 450}%
\special{fp}%
\put(21.0000,-3.7500){\makebox(0,0){$s_1$}}%
\put(19.5000,-5.2500){\makebox(0,0){$s_2$}}%
\put(22.5000,-5.2500){\makebox(0,0){$s_3$}}%
\put(17.0000,-8.0000){\makebox(0,0)[lb]{Type $(\rightarrow,s_1,s_2,s_3)$}}%
\end{picture}}%
\caption{All possible Gauss diagrams for triple points that can contribute to $I_X$ and $I_Y$ ($s_1,s_2,s_3\in\{+,-\}$)}
\label{fig:all_triple_arrow}
\end{figure}
We say the triple point is \emph{of type} $(\leftarrow,s_1,s_2,s_3)$ (resp.~$(\rightarrow,s_1,s_2,s_3)$) if it corresponds to the left (resp.~right) Gauss diagram in Figure~\ref{fig:all_triple_arrow}.
The arrows ``$\leftarrow$'' and ``$\rightarrow$'' indicate the direction of the oriented chord connecting the right-most and the left-most points of the Gauss diagrams.

\subsection{Contribution of triple points to $I_Y$}\label{ss:triple_contribution_Y}
We first consider the case where the triple point is of type $(\leftarrow,s_1,s_2,s_3)$.
We parametrize $S_k$ ($k=1,2,3$) by affine maps $f_k\colon(-\delta,\delta)\to S_k$ so that the midpoint $f_k(0)$ corresponds to the triple point and the orientations are compatible with the signs $s_k$.
Define the maps $h_{i4}\colon(-\delta,\delta)^{\times 3}\times\R^3\to S^2$ and $h_{42}\colon(-\delta,\delta)^{\times 3}\times\R^3\to S^2$ as indicated by $\Y$ (Figure~\ref{fig:Y_link}).
If we put $h_Y\coloneqq h_{14}\times h_{42}\times h_{34}\colon(-\delta,\delta)^{\times 3}\times\R^3$,
then the integral
\begin{equation}\label{eq:contribution_of_triple_points_to_I_Y}
 I^{\leftarrow,s_1,s_2,s_3}_Y\coloneqq\int_{(-\delta,\delta)^{\times 3}\times\R^3}h^*_Y(\eta\times\eta\times\eta)
\end{equation}
is the contribution of a triple point of type $(\leftarrow,s_1,s_2,s_3)$ to $I_Y$.

\begin{lem}
$I^{\leftarrow,s_1,s_2,s_3}_Y=0$ for sufficiently small $\epsilon$.
\end{lem}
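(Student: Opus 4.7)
The plan is to prove the vanishing via an orientation-reversing symmetry of the integral that exploits the $\O(2)$-invariance of $\eta$ (property (ii)), avoiding any explicit evaluation of the integrand.

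Writing $\R^3 = N^\perp \oplus \R N$, I will decompose $x_4 = (w, y)$ with $w \in N^\perp$ and $y \in \R$; because each $S_k$ is parallel to $N^\perp$ with midpoint at the triple point (placed at the origin of $N^\perp$), the parametrization takes the affine form $f_k(x_k) = (x_k v_k, z_k)$ for some unit vector $v_k \in N^\perp$. Introduce the involution
\[
\sigma\colon (-\delta,\delta)^3 \times \R^3 \to (-\delta,\delta)^3 \times \R^3,\qquad (x_1, x_2, x_3, w, y) \mapsto (-x_1, -x_2, -x_3, -w, y).
\]
Since $\sigma$ negates five of the six coordinates, it is an orientation-reversing diffeomorphism of the domain onto itself.

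The key point to verify is $\sigma^* \omega_Y = \omega_Y$. Let $R_\pi \in \O(2) \subset \O(3)$ denote the rotation by $\pi$ about the $N$-axis, which acts on $\R^3 = N^\perp \oplus \R N$ by $(u, z) \mapsto (-u, z)$. Under $\sigma$, both $f_k(x_k) = (x_k v_k, z_k)$ and $x_4 = (w, y)$ are sent to their $R_\pi$-images, so each difference $x_4 - f_k(x_k)$ is mapped to $R_\pi(x_4 - f_k(x_k))$. Consequently, $h_{ij} \circ \sigma = R_\pi \circ h_{ij}$, and hence $h_Y \circ \sigma = R_\pi^{\times 3} \circ h_Y$. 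Invariance (ii) of $\eta$ gives $R_\pi^* \eta = \eta$, and therefore
\[
\sigma^* \omega_Y = (h_Y \circ \sigma)^*(\eta \times \eta \times \eta) = h_Y^* \bigl((R_\pi^*\eta) \times (R_\pi^*\eta) \times (R_\pi^*\eta)\bigr) = \omega_Y.
\]

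Putting these together with the change-of-variables formula,
\[
I^{\leftarrow, s_1, s_2, s_3}_Y = \int \omega_Y = \int \sigma^* \omega_Y = -\int \omega_Y,
\]
so the integral vanishes. The main work is in verifying that the local affine model around the triple point indeed makes $\sigma$ a well-defined involution of the full integration domain that interacts with the geometric $R_\pi$-action on $S^2$ as claimed; once this setup is in place, the rest is routine. (The same involution, incidentally, treats the $(\rightarrow, s_1, s_2, s_3)$ case verbatim, so this approach will also handle the companion type.)
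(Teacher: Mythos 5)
Your argument is correct, and it rests on the same two ingredients as the paper's proof --- reversal of the affine parametrizations of the segments $S_k$ and the invariance of $\eta$ under the rotation $R_\pi$ by $\pi$ about $N$ --- but it packages them more economically. The paper introduces two reparametrization maps $F(t_1,t_2,t_3;x_4)=(t_1,-t_2,t_3;x_4)$ (orientation-reversing) and $G(t_1,t_2,t_3;x_4)=(-t_1,t_2,-t_3;x_4)$ (orientation-preserving), notes that each converts the model of type $(\leftarrow,s_1,s_2,s_3)$ into one of type $(\leftarrow,s_1,-s_2,-s_3)$ and that the two resulting models differ by $R_\pi$, and concludes $I_Y^{\leftarrow,s_1,s_2,s_3}=-I_Y^{\leftarrow,s_1,-s_2,-s_3}=+I_Y^{\leftarrow,s_1,-s_2,-s_3}=0$. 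Your $\sigma$ is in effect the composite of $F$ with the $R_\pi$-conjugate of $G$: a single orientation-reversing involution of the original domain that fixes $\omega_Y$ (since $\det(R_\pi|_{N^{\perp}})=+1$ forces $R_\pi^*\eta=\eta$ by condition (ii)), so the vanishing $\int\omega_Y=-\int\omega_Y$ is read off directly, without ever comparing against the auxiliary quantity $I_Y^{\leftarrow,s_1,-s_2,-s_3}$. The one point to make explicit, which you flag yourself, is the normalization that the triple point lies on the axis $\R N$, so that $f_k(-t)=R_\pi f_k(t)$; this is harmless by translation invariance. The paper's two-map bookkeeping has the side benefit that its intermediate sign relations are exactly of the shape reused in Lemma 4.3 for the $X'_k$ contributions, where the analogous integrals do not vanish and one genuinely needs the relations between different sign types; your single-involution shortcut is specific to the $Y$-graph, where full cancellation occurs. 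Your closing remark that $\sigma$ handles the $(\rightarrow,s_1,s_2,s_3)$ case verbatim is also correct and recovers Proposition 4.2.
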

\begin{proof}
First note that if we replace
\begin{itemize}
\item
	the parametrization $f_2$ of $S_2$ with $g_2(t)\coloneqq f_2(-t)$, or
\item
	the parametrizations $f_1,f_3$ of $S_1,S_3$ respectively with $h_1(t)\coloneqq f_1(-t)$ and $h_3(t)\coloneqq f_3(-t)$,
\end{itemize}
then we obtain models of triple point of type $(\leftarrow,s_1,-s_2,-s_3)$, here $-s_k=\mp$ if $s_k=\pm$ (see Figure~\ref{fig:triple_Y}).
\begin{figure}
\centering
\input{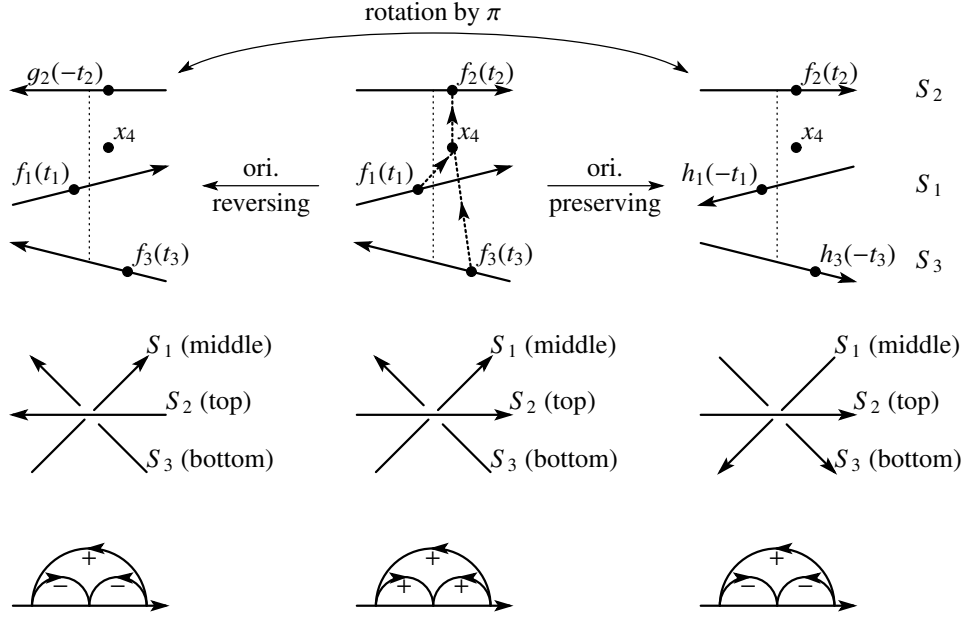}
\caption{The case that the triple point is of type $(\leftarrow,+,+,+)$}
\label{fig:triple_Y}
\end{figure}
The resulting models differ from each other by the $\pi$-rotation around $N$, but since we are assuming that $\eta$ is $\O(2)$-invariant, these models should give the same value $I^{\leftarrow,s_1,-s_2,-s_3}_Y$.

Define two diffeomorphisms $F,G\colon(-\delta,\delta)^{\times 3}\times\R^3\to(-\delta,\delta)^{\times 3}\times\R^3$ respectively by
\begin{equation}
 F(t_1,t_2,t_3;x_4)\coloneqq(t_1,-t_2,t_3;x_4)
 \quad\text{and}\quad
 G(t_1,t_2,t_3;x_4)\coloneqq(-t_1,t_2,-t_3;x_4).
\end{equation}
These diffeomorphisms make the following diagram commutative for both $F,G$.
\begin{equation}
\begin{split}
 \xymatrix{
  (-\delta,\delta)^{\times 3}\times\R^3\ar[r]^-{h_Y}\ar[d]_-{\cong}^-{\star} & (S^2)^{\times 3}\\
  (-\delta,\delta)^{\times 3}\times\R^3\ar[ru]_-{h_{Y,\star}} &
 }
\end{split}
\quad
(\star=F,G)
\end{equation}
Here $h_{Y,\star}$ is $h_Y$ defined with $f_2$ (resp.~$f_1$, $f_3$) replaced by $g_2$ (resp.~$h_1$, $h_3$) if $\star=F$ (resp.~$\star=G$).
Since $F$ reverses the orientation while $G$ preserves,
we have
\begin{equation}
 I^{\leftarrow,s_1,s_2,s_3}_Y
 =-\int_{(-\delta,\delta)^{\times 3}\times\R^3}h_{Y,F}^*(\eta\times\eta\times\eta)
 =+\int_{(-\delta,\delta)^{\times 3}\times\R^3}h_{Y,G}^*(\eta\times\eta\times\eta),
\end{equation}
and
\begin{equation}
 I^{\leftarrow,s_1,-s_2,-s_3}_Y=\int_{(-\delta,\delta)^{\times 3}\times\R^3}h_{Y,\star}^*(\eta\times\eta\times\eta)
 \quad\text{for both }
 \star=F,G.
\end{equation}
Thus, we have $I_Y^{\leftarrow,s_1,s_2,s_3}=I_Y^{\leftarrow,s_1,-s_2,-s_3}=-I_Y^{\leftarrow,s_1,-s_2,-s_3}$.
\end{proof}

The same holds for $I_Y^{\to,s_1,s_2,s_3}$.
Thus we have proved the following.

\begin{prop}\label{prop:triple_contribution_Y}
$I_Y^{\leftarrow,s_1,s_2,s_3}=I_Y^{\rightarrow,s_1,s_2,s_3}=0$ for any $s_1,s_2,s_3\in\{+,-\}$ and sufficiently small $\epsilon>0$.
\end{prop}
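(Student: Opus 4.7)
The plan is to reduce the $\rightarrow$ case to the same involution argument already executed for the $\leftarrow$ case in the preceding lemma, after which the proposition follows immediately.

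First, I would set up the $\rightarrow$-model by parametrizing $S_1, S_2, S_3$ with affine maps $f_k \colon (-\delta, \delta) \to \R^3$, choosing the relative vertical placement of $S_1$ and $S_3$ so that the resulting triple point has the Gauss diagram on the right of Figure~\ref{fig:all_triple_arrow}; concretely, this amounts to swapping the roles of $S_1$ and $S_3$ relative to the $\leftarrow$ setup. The configuration space $(-\delta,\delta)^{\times 3}\times\R^3$, the maps $h_{14}, h_{42}, h_{34}$ dictated by $\Y$, and the integral $I_Y^{\rightarrow, s_1, s_2, s_3}$ are defined exactly as in \eqref{eq:contribution_of_triple_points_to_I_Y}.

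Next, I would apply the same pair of involutions
\[
 F(t_1, t_2, t_3; x_4) \coloneqq (t_1, -t_2, t_3; x_4),
 \qquad
 G(t_1, t_2, t_3; x_4) \coloneqq (-t_1, t_2, -t_3; x_4),
\]
one orientation-reversing and one orientation-preserving on the six-dimensional configuration space. Composing $h_Y$ with each of these factors through the map $h_{Y,\star}$ built from the strands reparametrized by $t \mapsto -t$, and this reparametrization transforms a type-$(\rightarrow, s_1, s_2, s_3)$ model into a type-$(\rightarrow, s_1, -s_2, -s_3)$ one. The difference between $h_Y \circ \star$ and $h_{Y,\star}$ is a product of identities and antipodal involutions on the $(S^2)^{\times 3}$-factors; since the antipodal involution on $S^2$ is realized by the $\pi$-rotation about $N$, which lies in the $\O(2)$-subgroup under whose action $\eta$ is invariant by hypothesis, the product symmetry preserves $\eta \times \eta \times \eta$. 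Combining the Jacobian sign of $\star$ with this invariance yields
\[
 I_Y^{\rightarrow, s_1, s_2, s_3} = I_Y^{\rightarrow, s_1, -s_2, -s_3} = - I_Y^{\rightarrow, s_1, s_2, s_3},
\]
whence $I_Y^{\rightarrow, s_1, s_2, s_3} = 0$; together with the lemma this establishes the proposition.

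The main obstacle is confirming that the target-side symmetries induced by $F$ and $G$ really do lie in the $\O(2)$-orbit of the identity. Because each of $h_{14}, h_{42}, h_{34}$ is a unit difference vector involving at most one of the flipped variables, the induced map on each $S^2$-factor is either the identity or the full antipodal map, and the antipodal map on $S^2$ is precisely the $\pi$-rotation about $N$. This bookkeeping must be redone in the $\rightarrow$ setup because swapping $S_1$ and $S_3$ changes which of $h_{14}$ and $h_{34}$ enters each clause, but the structural conclusion is identical to the $\leftarrow$ case, and the $\O(2)$-invariance of $\eta$ is exactly the hypothesis needed to absorb these sign flips simultaneously.
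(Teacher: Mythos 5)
Your overall strategy---run the two involutions $F$ and $G$ on a model of the $\rightarrow$-type triple point, note that one reverses and one preserves orientation while both land on a model of type $(\rightarrow,s_1,-s_2,-s_3)$, and conclude $I=-I$---is exactly the paper's (the paper proves the $\leftarrow$ case as a lemma and then simply remarks that the same argument handles $\rightarrow$). However, the step in which you compare the integrands is wrong as written. The commutative diagrams for $F$ and $G$ hold on the nose: since $h_{Y,\star}$ is built by reparametrizing by $t\mapsto -t$ precisely those strands whose coordinates $\star$ negates, one has $h_{Y,\star}\circ\star=h_Y$ with the \emph{identity} on every $S^2$-factor (e.g.\ $h_{14,G}(-t_1,t_2,-t_3;x_4)=\frac{x_4-f_1(-(-t_1))}{\abs{x_4-f_1(t_1)}}=h_{14}(t_1,t_2,t_3;x_4)$), so no antipodal involutions appear anywhere. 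More seriously, your claim that ``the antipodal involution on $S^2$ is realized by the $\pi$-rotation about $N$'' is false: the $\pi$-rotation about $N$ is $(x,y,z)\mapsto(-x,-y,z)$, whereas the antipodal map is $(x,y,z)\mapsto(-x,-y,-z)$; the latter does not lie in $\O(2)\times\{\id\}$ and carries the north pole to the south pole. Since $\supp(\eta)$ is an $\epsilon$-neighborhood of the north pole, $\eta$ is emphatically \emph{not} antipodal-invariant---this asymmetry is the whole point of the Brooks--Komendarczyk localization, and the paper explicitly notes that symmetry arguments based on the antipodal map are unavailable for this $\eta$. If your argument genuinely required absorbing antipodal flips by invariance of $\eta$, it would collapse.

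What you have also omitted is the one place where the $\O(2)$-invariance of $\eta$ is actually needed. The $F$-model (reverse $S_2$ only) and the $G$-model (reverse $S_1$ and $S_3$) are two \emph{distinct} geometric configurations, both of type $(\rightarrow,s_1,-s_2,-s_3)$; they are related by the genuine $\pi$-rotation of $\R^3$ about the vertical axis through the triple point, which carries each segment to itself with reversed orientation and induces the rotation (not the antipodal map) on each target $S^2$. Invariance of $\eta$ under this rotation, which does lie in $\SO(2)\subset\O(2)$, is what identifies $\int h_{Y,F}^*(\eta\times\eta\times\eta)$ with $\int h_{Y,G}^*(\eta\times\eta\times\eta)$ and thereby closes the chain $I_{\Y}^{\rightarrow,s_1,s_2,s_3}=I_{\Y}^{\rightarrow,s_1,-s_2,-s_3}=-I_{\Y}^{\rightarrow,s_1,-s_2,-s_3}$. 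With the antipodal-map reasoning deleted and this rotation step restored, your proof coincides with the paper's.
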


As for 3-component long links,
we need to investigate the contribution $I_{\Y}^{\leftarrow,s_1,s_2,s_3}$ and $I_{\Y}^{\to,s_1,s_2,s_3}$ of triple points that involve all the three components.
These contributions are similarly defined to \eqref{eq:contribution_of_triple_points_to_I_Y} with $S_k$ ($k=1,2,3$) replaced by segments on the $k$-th component of the link that form a triple point.
The same argument as above shows that $I_{\Y}^{\leftarrow,s_1,s_2,s_3}=I_{\Y}^{\rightarrow,s_1,s_2,s_3}=0$ for sufficiently small $\epsilon>0$.

\subsection{Contribution of triple points to $I_X$}
Consider the model for a triple point formed by the segments $S_1$, $S_2$ and $S_3$, as in \S\ref{ss:triple_contribution_Y}.
To compute the contribution of the configurations of types $X'_1$,
suppose that the triple point is of type $(\leftarrow,s_1,s_2,s_3)$ and each $S_k$ is parametrized by an affine map $f_k\colon(-\delta,\delta)\to S_k$.
Define the maps $h_{13},h_{42}\colon(-\delta,\delta)^{\times 4}\to S^2$ as indicated by Figure~\ref{fig:triple_contribution_X} (the left-most),
and put $h_{X'_1}=h_{13}\times h_{42}\colon(-\delta,\delta)^{\times 4}\to(S^2)^{\times 2}$.
Then the integral
\begin{equation}
 I_X^{X'_1,s_1,s_2,s_3}\coloneqq\int_{(-\delta,\delta)^{\times 4}}h_{X'_1}^*(\eta\times\eta)
\end{equation}
is the contributions of configurations of type $X'_1$ to $I_X$.
In the same ways the contribution $I_X^{*,s_1,s_2,s_3}$ of configurations of type $*$ to $I_X$, $*=(X'_2.l),(X'_2,r),X'_3$, are defined.
For $*=(X'_2,r)$ and $X'_3$,
we need to assume that the triple point is of type $(\rightarrow,s_1,s_2,s_3)$.

\begin{lem}\label{lem:I_X'}
For all $s_1,s_2,s_3\in\{+,-\}$, we have
\begin{enumerate}[(1)]
\item
	$I_X^{X'_k,s_1,s_2,s_3}=-I_X^{X'_k,s_1,-s_2,-s_3}=I_X^{X'_k,-s_1,-s_2,-s_3}$ ($k=1,3$),\item
	$I_X^{(X'_2,*),s_1,s_2,s_3}=I_X^{(X'_2,*),s_1,-s_2,-s_3}=I_X^{(X'_2,*),-s_1,-s_2,-s_3}$ for both $*=l,r$,
\end{enumerate}
where $-s_k=\mp$ for $s_k=\pm$,
\end{lem}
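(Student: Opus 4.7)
The strategy mirrors that of Proposition~\ref{prop:triple_contribution_Y}. Parametrize each $S_m$ affinely by $f_m\colon(-\delta,\delta)\to S_m$ with $f_m(0)$ the triple point; the relevant configuration space of four ordered points is then identified with $(-\delta,\delta)^{\times 4}$. Assume the triple point is of type $(\leftarrow,s_1,s_2,s_3)$ for $X'_1,(X'_2,l)$ and of type $(\rightarrow,s_1,s_2,s_3)$ for $(X'_2,r),X'_3$. Introduce two diffeomorphisms of $(-\delta,\delta)^{\times 4}$: the map $F$ negates every coordinate whose corresponding configuration point lies on $S_2$, and $H$ negates all four coordinates simultaneously. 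As in \S\ref{ss:triple_contribution_Y}, each composition $h_{X'_k}\circ F$ (resp.\ $h_{X'_k}\circ H$) agrees, after applying the $\pi$-rotation about $N$ on each $S^2$-factor of the target, with the map $h_{X'_k}$ for a model of a triple point of type $(\cdot,s_1,-s_2,-s_3)$ (resp.\ $(\cdot,-s_1,-s_2,-s_3)$).

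For each $X'_k$, let $(n_1,n_2,n_3)$ be the number of configuration points lying on $S_1,S_2,S_3$ respectively. Reading from Figure~\ref{fig:triple_contribution_X} one finds $(n_1,n_2,n_3)=(2,1,1)$ for $X'_1$, $(1,2,1)$ for both $(X'_2,l)$ and $(X'_2,r)$, and $(1,1,2)$ for $X'_3$. The Jacobian sign of $F$ on $(-\delta,\delta)^{\times 4}$ is $(-1)^{n_2}$, while that of $H$ is $(-1)^{n_1+n_2+n_3}=+1$. Since the $\pi$-rotation about $N$ lies in $\SO(2)\subset\O(2)$ and has determinant $+1$, property (ii) in the definition of $\eta$ ensures $\eta$ is preserved, so it introduces no further sign. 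Changing variables via $F$ and $H$ thus yields
\begin{equation*}
 I_X^{X'_k,s_1,s_2,s_3}=(-1)^{n_2}\,I_X^{X'_k,s_1,-s_2,-s_3},\qquad I_X^{X'_k,s_1,s_2,s_3}=I_X^{X'_k,-s_1,-s_2,-s_3}.
\end{equation*}
Specializing $n_2=1$ (that is, $k=1,3$) yields statement (1), while $n_2=2$ (that is, $*=l,r$) yields statement (2).

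The main technical obstacle, exactly as for Proposition~\ref{prop:triple_contribution_Y} and Figure~\ref{fig:triple_Y}, is verifying the diagrammatic claim that $F$ and $H$, followed by the appropriate $\pi$-rotation of the $S^2$-factors, really do transport our model to one of the asserted new sign-type. This is a case-by-case check comparing the orientations of $S_1,S_2,S_3$ before and after the reparametrization with the Gauss diagrams of Figure~\ref{fig:all_triple_arrow}. Once this is granted, the rest of the proof reduces to the Jacobian bookkeeping above together with the $\O(2)$-invariance of $\eta$.
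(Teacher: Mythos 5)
Your treatment of the first equality in each of (1) and (2) is fine and is essentially the paper's argument in disguise: reversing the parametrization of $S_2$ (your $F$) and reversing the parametrizations of $S_1$ and $S_3$ (the paper's choice of $F$, $G$, $H$) both flip exactly the two crossing signs $s_2,s_3$ of the crossings involving $S_2$ while leaving $s_1$ alone, and since $n_1+n_3\equiv n_2 \pmod 2$ (because $n_1+n_2+n_3=4$) the two changes of variables have the same Jacobian sign $(-1)^{n_2}$.

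The second equality is where your proof breaks down. Negating all four coordinates amounts to reversing the orientations of all three segments $S_1,S_2,S_3$. At each of the three crossings of the triple point this reverses \emph{both} strands, so every crossing sign is preserved ($\det(-u,-v)=\det(u,v)$), and the over/under data is of course unchanged; hence $h_{X'_k}\circ H$ is the map attached to a model of the \emph{same} type $(\cdot,s_1,s_2,s_3)$, not of type $(\cdot,-s_1,-s_2,-s_3)$. Post-composing with the $\pi$-rotation about $N$ cannot repair this, since a rotation lies in $\SO(2)$ and also preserves all crossing signs. Your change of variables by $H$ therefore only proves the tautology $I_X^{*,s_1,s_2,s_3}=I_X^{*,s_1,s_2,s_3}$; the ``diagrammatic claim'' you defer at the end is exactly where the error hides, because for $H$ that claim is false. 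To flip all three signs simultaneously one needs an orientation-\emph{reversing} symmetry of $N^{\perp}$: the paper uses the reflection $\rho$ of $\R^3$ in the vertical plane containing $\R N$ and $S_2$, which preserves heights (hence over/under) and negates all three crossing signs, and acts on the target $(S^2)^{\times 2}$ by $\rho\times\rho$. At that point one must invoke the full strength of condition (ii) in the definition of $\eta$, namely $\rho^*\eta=(-1)^{\det\rho}\eta=-\eta$, with the two minus signs from the two $\eta$-factors cancelling. Your proof never uses the $(-1)^{\det\varphi}$ part of the $\O(2)$-equivariance, which is a symptom of the missing step.
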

\begin{proof}
\begin{enumerate}[(1)]
\item
	We first prove (1) for $X'_1$.
	Consider the reflection $\rho\colon\R^3\to\R^3$ with respect to the plane that contains $\R N$ and $S_2$ (see Figure~\ref{fig:triple_X'1_reflection}).
	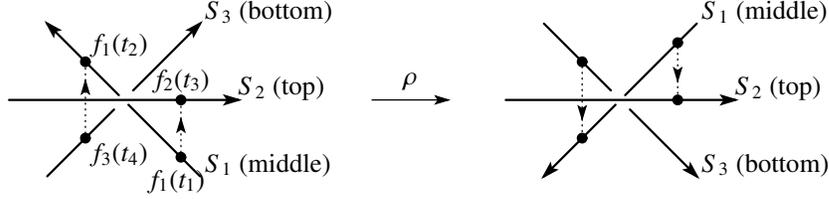
\begin{figure}
	\centering
{\unitlength 0.1in%
\begin{picture}(38.0000,9.4500)(2.0000,-11.0000)%
%
\special{pn 13}%
\special{pa 1200 1100}%
\special{pa 825 725}%
\special{fp}%
%
\special{pn 13}%
\special{pa 400 1100}%
\special{pa 750 750}%
\special{fp}%
\put(14.0000,-7.0000){\makebox(0,0)[lb]{$S_2$ (top)}}%
%
\special{pn 13}%
\special{pa 200 700}%
\special{pa 1400 700}%
\special{fp}%
\special{sh 1}%
\special{pa 1400 700}%
\special{pa 1333 680}%
\special{pa 1347 700}%
\special{pa 1333 720}%
\special{pa 1400 700}%
\special{fp}%
%
\special{pn 13}%
\special{pa 775 675}%
\special{pa 400 300}%
\special{fp}%
\special{sh 1}%
\special{pa 400 300}%
\special{pa 433 361}%
\special{pa 438 338}%
\special{pa 461 333}%
\special{pa 400 300}%
\special{fp}%
\put(12.2500,-11.0000){\makebox(0,0)[lb]{$S_1$ (middle)}}%
%
\special{pn 13}%
\special{pa 850 650}%
\special{pa 1200 300}%
\special{fp}%
\special{sh 1}%
\special{pa 1200 300}%
\special{pa 1139 333}%
\special{pa 1162 338}%
\special{pa 1167 361}%
\special{pa 1200 300}%
\special{fp}%
\put(12.2500,-3.0000){\makebox(0,0)[lb]{$S_3$ (bottom)}}%
\put(23.0000,-6.0000){\makebox(0,0){$\rho$}}%
%
\special{pn 8}%
\special{pa 2100 700}%
\special{pa 2500 700}%
\special{fp}%
\special{sh 1}%
\special{pa 2500 700}%
\special{pa 2433 680}%
\special{pa 2447 700}%
\special{pa 2433 720}%
\special{pa 2500 700}%
\special{fp}%
%
\special{pn 13}%
\special{pa 2800 700}%
\special{pa 4000 700}%
\special{fp}%
\special{sh 1}%
\special{pa 4000 700}%
\special{pa 3933 680}%
\special{pa 3947 700}%
\special{pa 3933 720}%
\special{pa 4000 700}%
\special{fp}%
\put(40.0000,-7.0000){\makebox(0,0)[lb]{$S_2$ (top)}}%
%
\special{pn 13}%
\special{pa 3800 300}%
\special{pa 3425 675}%
\special{fp}%
%
\special{pn 13}%
\special{pa 3375 725}%
\special{pa 3000 1100}%
\special{fp}%
\special{sh 1}%
\special{pa 3000 1100}%
\special{pa 3061 1067}%
\special{pa 3038 1062}%
\special{pa 3033 1039}%
\special{pa 3000 1100}%
\special{fp}%
\put(38.2500,-11.0000){\makebox(0,0)[lb]{$S_3$ (bottom)}}%
\put(38.2500,-3.0000){\makebox(0,0)[lb]{$S_1$ (middle)}}%
%
\special{pn 13}%
\special{pa 3000 300}%
\special{pa 3350 650}%
\special{fp}%
%
\special{pn 13}%
\special{pa 3450 750}%
\special{pa 3800 1100}%
\special{fp}%
\special{sh 1}%
\special{pa 3800 1100}%
\special{pa 3767 1039}%
\special{pa 3762 1062}%
\special{pa 3739 1067}%
\special{pa 3800 1100}%
\special{fp}%
%
\special{sh 1.000}%
\special{ia 1100 700 25 25 0.0000000 6.2831853}%
\special{pn 8}%
\special{ar 1100 700 25 25 0.0000000 6.2831853}%
%
\special{sh 1.000}%
\special{ia 1100 1000 25 25 0.0000000 6.2831853}%
\special{pn 8}%
\special{ar 1100 1000 25 25 0.0000000 6.2831853}%
%
\special{sh 1.000}%
\special{ia 600 500 25 25 0.0000000 6.2831853}%
\special{pn 8}%
\special{ar 600 500 25 25 0.0000000 6.2831853}%
%
\special{sh 1.000}%
\special{ia 600 900 25 25 0.0000000 6.2831853}%
\special{pn 8}%
\special{ar 600 900 25 25 0.0000000 6.2831853}%
\put(11.0000,-6.0000){\makebox(0,0){$f_2(t_3)$}}%
\put(10.7500,-11.2500){\makebox(0,0){$f_1(t_1)$}}%
%
\special{pn 8}%
\special{pa 1100 975}%
\special{pa 1100 800}%
\special{dt 0.030}%
\special{sh 1}%
\special{pa 1100 800}%
\special{pa 1080 867}%
\special{pa 1100 853}%
\special{pa 1120 867}%
\special{pa 1100 800}%
\special{fp}%
%
\special{pn 8}%
\special{pa 1100 800}%
\special{pa 1100 725}%
\special{dt 0.030}%
\put(6.2500,-4.7500){\makebox(0,0)[lb]{$f_1(t_2)$}}%
\put(6.2500,-9.2500){\makebox(0,0)[lt]{$f_3(t_4)$}}%
%
\special{pn 8}%
\special{pa 600 875}%
\special{pa 600 600}%
\special{dt 0.030}%
\special{sh 1}%
\special{pa 600 600}%
\special{pa 580 667}%
\special{pa 600 653}%
\special{pa 620 667}%
\special{pa 600 600}%
\special{fp}%
%
\special{pn 8}%
\special{pa 600 600}%
\special{pa 600 525}%
\special{dt 0.030}%
%
\special{sh 1.000}%
\special{ia 3700 700 25 25 0.0000000 6.2831853}%
\special{pn 8}%
\special{ar 3700 700 25 25 0.0000000 6.2831853}%
%
\special{sh 1.000}%
\special{ia 3700 400 25 25 0.0000000 6.2831853}%
\special{pn 8}%
\special{ar 3700 400 25 25 0.0000000 6.2831853}%
%
\special{sh 1.000}%
\special{ia 3200 500 25 25 0.0000000 6.2831853}%
\special{pn 8}%
\special{ar 3200 500 25 25 0.0000000 6.2831853}%
%
\special{sh 1.000}%
\special{ia 3200 900 25 25 0.0000000 6.2831853}%
\special{pn 8}%
\special{ar 3200 900 25 25 0.0000000 6.2831853}%
%
\special{pn 8}%
\special{pa 3700 425}%
\special{pa 3700 600}%
\special{dt 0.030}%
\special{sh 1}%
\special{pa 3700 600}%
\special{pa 3720 533}%
\special{pa 3700 547}%
\special{pa 3680 533}%
\special{pa 3700 600}%
\special{fp}%
%
\special{pn 8}%
\special{pa 3200 525}%
\special{pa 3200 800}%
\special{dt 0.030}%
\special{sh 1}%
\special{pa 3200 800}%
\special{pa 3220 733}%
\special{pa 3200 747}%
\special{pa 3180 733}%
\special{pa 3200 800}%
\special{fp}%
%
\special{pn 8}%
\special{pa 3200 800}%
\special{pa 3200 875}%
\special{dt 0.030}%
%
\special{pn 8}%
\special{pa 3700 600}%
\special{pa 3700 675}%
\special{dt 0.030}%
\end{picture}}%
	\caption{The case for $X'_1$; the triple point changes its type from $(\leftarrow,-,+,+)$ to $(\leftarrow,+,-,-)$}
	\label{fig:triple_X'1_reflection}
	\end{figure}
	Then $\rho\circ f_k$ ($k=1,2,3$) form the model for the triple point of type $(\leftarrow,-\epsilon_1,-\epsilon_2,-\epsilon_3)$.
	Define $\varphi\colon(S^2)^{\times 2}\to(S^2)^{\times 2}$ by $\varphi\coloneqq\rho\times\rho$.
	Then we have a commutative diagram
	\begin{equation}
	\begin{split}
	 \xymatrix{
	 (-\delta,\delta)^{\times 4}\ar[r]^-{h_{X'_1}}\ar[rd]_-{\widehat{h}_{X'_1}} & (S^2)^{\times 2}\ar[d]^-{\varphi} \\
	  & (S^2)^{\times 2}
	 }
	\end{split}
	\end{equation}
	here $\widehat{h}_{X'_1}$ is $h_{X'_1}$ defined with $f_k$ replaced by $\rho\circ f_k$.
	This proves $I_X^{X'_1,s_1,s_2,s_3}=I_X^{X'_1,-s_1,-s_2,-s_3}$, since $\varphi$ preserves the orientation and we assume that $\eta$ is $\O(2)$-invariant.

	Let $g_k\colon(-\delta,\delta)\to S_k$ ($k=1,3$) be the parametrizations of $S_k$ with opposite orientation, namely $g_k(t)\coloneqq f_k(-t)$.
	If $f_1,f_2,f_3$ form a triple point of type $(\leftarrow,s_1,s_2,s_3)$,
	then $g_1,f_2,g_3$ form a triple point of type $(\leftarrow,s_1,-s_2,-s_3)$ (see Figure~\ref{fig:triple_X'1_reverse}).
	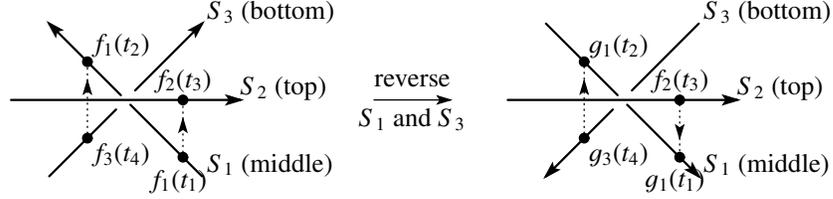
\begin{figure}
	\centering
{\unitlength 0.1in%
\begin{picture}(38.0000,9.4500)(2.0000,-11.0000)%
%
\special{pn 13}%
\special{pa 1200 1100}%
\special{pa 825 725}%
\special{fp}%
%
\special{pn 13}%
\special{pa 400 1100}%
\special{pa 750 750}%
\special{fp}%
\put(14.0000,-7.0000){\makebox(0,0)[lb]{$S_2$ (top)}}%
%
\special{pn 13}%
\special{pa 200 700}%
\special{pa 1400 700}%
\special{fp}%
\special{sh 1}%
\special{pa 1400 700}%
\special{pa 1333 680}%
\special{pa 1347 700}%
\special{pa 1333 720}%
\special{pa 1400 700}%
\special{fp}%
%
\special{pn 13}%
\special{pa 775 675}%
\special{pa 400 300}%
\special{fp}%
\special{sh 1}%
\special{pa 400 300}%
\special{pa 433 361}%
\special{pa 438 338}%
\special{pa 461 333}%
\special{pa 400 300}%
\special{fp}%
\put(12.2500,-11.0000){\makebox(0,0)[lb]{$S_1$ (middle)}}%
%
\special{pn 13}%
\special{pa 850 650}%
\special{pa 1200 300}%
\special{fp}%
\special{sh 1}%
\special{pa 1200 300}%
\special{pa 1139 333}%
\special{pa 1162 338}%
\special{pa 1167 361}%
\special{pa 1200 300}%
\special{fp}%
\put(12.2500,-3.0000){\makebox(0,0)[lb]{$S_3$ (bottom)}}%
%
\special{pn 8}%
\special{pa 2100 700}%
\special{pa 2500 700}%
\special{fp}%
\special{sh 1}%
\special{pa 2500 700}%
\special{pa 2433 680}%
\special{pa 2447 700}%
\special{pa 2433 720}%
\special{pa 2500 700}%
\special{fp}%
%
\special{pn 13}%
\special{pa 2800 700}%
\special{pa 4000 700}%
\special{fp}%
\special{sh 1}%
\special{pa 4000 700}%
\special{pa 3933 680}%
\special{pa 3947 700}%
\special{pa 3933 720}%
\special{pa 4000 700}%
\special{fp}%
\put(40.0000,-7.0000){\makebox(0,0)[lb]{$S_2$ (top)}}%
%
\special{pn 13}%
\special{pa 3800 300}%
\special{pa 3450 650}%
\special{fp}%
%
\special{pn 13}%
\special{pa 3350 750}%
\special{pa 3000 1100}%
\special{fp}%
\special{sh 1}%
\special{pa 3000 1100}%
\special{pa 3061 1067}%
\special{pa 3038 1062}%
\special{pa 3033 1039}%
\special{pa 3000 1100}%
\special{fp}%
\put(38.2500,-11.0000){\makebox(0,0)[lb]{$S_1$ (middle)}}%
\put(38.2500,-3.0000){\makebox(0,0)[lb]{$S_3$ (bottom)}}%
%
\special{pn 13}%
\special{pa 3000 300}%
\special{pa 3375 675}%
\special{fp}%
%
\special{pn 13}%
\special{pa 3425 725}%
\special{pa 3800 1100}%
\special{fp}%
\special{sh 1}%
\special{pa 3800 1100}%
\special{pa 3767 1039}%
\special{pa 3762 1062}%
\special{pa 3739 1067}%
\special{pa 3800 1100}%
\special{fp}%
%
\special{sh 1.000}%
\special{ia 1100 700 25 25 0.0000000 6.2831853}%
\special{pn 8}%
\special{ar 1100 700 25 25 0.0000000 6.2831853}%
%
\special{sh 1.000}%
\special{ia 1100 1000 25 25 0.0000000 6.2831853}%
\special{pn 8}%
\special{ar 1100 1000 25 25 0.0000000 6.2831853}%
%
\special{sh 1.000}%
\special{ia 600 500 25 25 0.0000000 6.2831853}%
\special{pn 8}%
\special{ar 600 500 25 25 0.0000000 6.2831853}%
%
\special{sh 1.000}%
\special{ia 600 900 25 25 0.0000000 6.2831853}%
\special{pn 8}%
\special{ar 600 900 25 25 0.0000000 6.2831853}%
\put(11.0000,-6.0000){\makebox(0,0){$f_2(t_3)$}}%
\put(10.7500,-11.2500){\makebox(0,0){$f_1(t_1)$}}%
%
\special{pn 8}%
\special{pa 1100 975}%
\special{pa 1100 800}%
\special{dt 0.030}%
\special{sh 1}%
\special{pa 1100 800}%
\special{pa 1080 867}%
\special{pa 1100 853}%
\special{pa 1120 867}%
\special{pa 1100 800}%
\special{fp}%
%
\special{pn 8}%
\special{pa 1100 800}%
\special{pa 1100 725}%
\special{dt 0.030}%
\put(6.2500,-4.7500){\makebox(0,0)[lb]{$f_1(t_2)$}}%
\put(6.2500,-9.2500){\makebox(0,0)[lt]{$f_3(t_4)$}}%
%
\special{pn 8}%
\special{pa 600 875}%
\special{pa 600 600}%
\special{dt 0.030}%
\special{sh 1}%
\special{pa 600 600}%
\special{pa 580 667}%
\special{pa 600 653}%
\special{pa 620 667}%
\special{pa 600 600}%
\special{fp}%
%
\special{pn 8}%
\special{pa 600 600}%
\special{pa 600 525}%
\special{dt 0.030}%
%
\special{sh 1.000}%
\special{ia 3700 700 25 25 0.0000000 6.2831853}%
\special{pn 8}%
\special{ar 3700 700 25 25 0.0000000 6.2831853}%
%
\special{sh 1.000}%
\special{ia 3700 1000 25 25 0.0000000 6.2831853}%
\special{pn 8}%
\special{ar 3700 1000 25 25 0.0000000 6.2831853}%
%
\special{sh 1.000}%
\special{ia 3200 500 25 25 0.0000000 6.2831853}%
\special{pn 8}%
\special{ar 3200 500 25 25 0.0000000 6.2831853}%
%
\special{sh 1.000}%
\special{ia 3200 900 25 25 0.0000000 6.2831853}%
\special{pn 8}%
\special{ar 3200 900 25 25 0.0000000 6.2831853}%
%
\special{pn 8}%
\special{pa 3700 725}%
\special{pa 3700 900}%
\special{dt 0.030}%
\special{sh 1}%
\special{pa 3700 900}%
\special{pa 3720 833}%
\special{pa 3700 847}%
\special{pa 3680 833}%
\special{pa 3700 900}%
\special{fp}%
%
\special{pn 8}%
\special{pa 3200 875}%
\special{pa 3200 600}%
\special{dt 0.030}%
\special{sh 1}%
\special{pa 3200 600}%
\special{pa 3180 667}%
\special{pa 3200 653}%
\special{pa 3220 667}%
\special{pa 3200 600}%
\special{fp}%
%
\special{pn 8}%
\special{pa 3700 900}%
\special{pa 3700 975}%
\special{dt 0.030}%
\put(23.0000,-6.0000){\makebox(0,0){reverse}}%
\put(23.0000,-8.0000){\makebox(0,0){$S_1$ and $S_3$}}%
\put(32.2500,-4.8000){\makebox(0,0)[lb]{$g_1(t_2)$}}%
\put(36.7500,-11.2500){\makebox(0,0){$g_1(t_1)$}}%
\put(32.2500,-9.2500){\makebox(0,0)[lt]{$g_3(t_4)$}}%
\put(37.0000,-6.0000){\makebox(0,0){$f_2(t_3)$}}%
%
\special{pn 8}%
\special{pa 3200 525}%
\special{pa 3200 600}%
\special{dt 0.030}%
\end{picture}}%
	\caption{The case for $X'_1$; the triple point changes its type from $(\leftarrow,-,+,+)$ to $(\leftarrow,-,-,-)$}
	\label{fig:triple_X'1_reverse}
	\end{figure}
	Define $F\colon(-\delta,\delta)^{\times 4}\to(-\delta,\delta)^{\times 4}$ by $F(t_1,\dots,t_4)\coloneqq(-t_1,-t_2,t_3,-t_4)$,
	then we have a commutative diagram
	\begin{equation}
	\begin{split}
	 \xymatrix{
	  (-\delta,\delta)^{\times 4}\ar[r]^-{h_{X'_1}}\ar[d]_-F & (S^2)^{\times 2} \\
	  (-\delta,\delta)^{\times 4}\ar[ru]_-{\underline{h}_{X'_1}} &
	 }
	\end{split}
	\end{equation}
	here $\underline{h}^{X'_1}$ is $h^{X'_1}$ defined with $f_1,f_3$ replaced respectively by $g_1,g_3$.
	This proves $I_X^{X'_1,s_1,s_2,s_3}=-I_X^{X'_1,s_1,-s_2,-s_3}$, since $F$ reverses the orientation.

	The proof of (1) for $X'_3$ is the same, replacing $F$ with $G(t_1,\dots,t_4)=(-t_1,t_2,-t_3,-t_4)$.
\item
	The proof of $I_X^{(X'_2,*),s_1,s_2,s_3}=I_X^{(X'_2,*),-s_1,-s_2,-s_3}$ is the same as (1).
	That of $I_X^{(X'_2,*),s_1,s_2,s_3}=I_X^{(X'_2,*),s_1,-s_2,-s_3}$ is also similar to (1), but we need to replace $F$ and $G$ with $H(t_1,\dots,t_4)=(-t_1,t_2,t_3,-t_4)$, that preserves orientations.\qedhere
\end{enumerate}
\end{proof}

Consider the long knot $f$ shown in Figure~\ref{fig:knot_1}.
\begin{figure}
\centering
\input{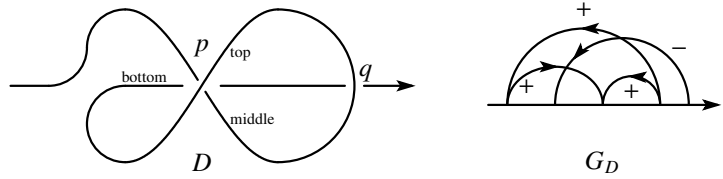}
\caption{The long knot $f$; the triple point is of type $(\leftarrow,+,+,+)$ and $\pair{X}{G_D}=1\cdot(-1)=-1$}
\label{fig:knot_1}
\end{figure}
Clearly $f$ is isotopic to the unknot, and hence $c(f)=0$.
$G_D$ is as in Figure~\ref{fig:knot_1}.
For sufficiently small $\epsilon>0$,
we have $I_X(f)-I_Y(f)=I_X(f)$ by Proposition~\ref{prop:triple_contribution_Y}.
There exists one pair of double crossings that contributes to $I_X(f)$ as $\pair{X}{G_D}=-1$;
one double crossing is formed by the bottom and the top segment of $p$,
and the other is $q$.
In addition,
because $p$ is a triple point of type $(\leftarrow,+,+,+)$,
the contributions of types $X'_1$ and $(X'_2,l)$ occur.
Thus we see that
\begin{equation}\label{eq:knot_1}
 0=c(f)=\pair{X}{G_D}+I^{X'_1,+,+,+}_X+I^{(X'_2,l),+,+,+}_X=-1+I^{X'_1,+,+,+}_X+I^{(X'_2,l),+,+,+}_X.
\end{equation}
Consider the long knot $g$ shown in Figure~\ref{fig:knot_2}.
\begin{figure}
\centering
{\unitlength 0.1in%
\begin{picture}(36.9200,9.7200)(9.0800,-30.2700)%
\put(20.0000,-25.0000){\makebox(0,0)[lb]{{\tiny middle}}}%
\put(19.5000,-22.0000){\makebox(0,0)[lb]{{\tiny top}}}%
\put(13.0000,-23.5000){\makebox(0,0){{\tiny bottom}}}%
%
\special{pn 13}%
\special{pa 3400 2800}%
\special{pa 4600 2800}%
\special{fp}%
\special{sh 1}%
\special{pa 4600 2800}%
\special{pa 4533 2780}%
\special{pa 4547 2800}%
\special{pa 4533 2820}%
\special{pa 4600 2800}%
\special{fp}%
%
\special{pn 13}%
\special{ar 3900 2800 400 400 4.7123890 6.2831853}%
%
\special{pn 13}%
\special{pa 3915 2400}%
\special{pa 3900 2400}%
\special{fp}%
\special{sh 1}%
\special{pa 3900 2400}%
\special{pa 3967 2420}%
\special{pa 3953 2400}%
\special{pa 3967 2380}%
\special{pa 3900 2400}%
\special{fp}%
%
\special{pn 13}%
\special{ar 4050 2800 250 200 4.1719695 6.2831853}%
%
\special{pn 13}%
\special{pa 3936 2622}%
\special{pa 3921 2629}%
\special{fp}%
\special{sh 1}%
\special{pa 3921 2629}%
\special{pa 3990 2619}%
\special{pa 3969 2606}%
\special{pa 3973 2583}%
\special{pa 3921 2629}%
\special{fp}%
%
\special{pn 13}%
\special{ar 3650 2800 150 150 3.1415927 4.7123890}%
%
\special{pn 13}%
\special{pa 3635 2651}%
\special{pa 3650 2650}%
\special{fp}%
\special{sh 1}%
\special{pa 3650 2650}%
\special{pa 3582 2634}%
\special{pa 3597 2654}%
\special{pa 3585 2674}%
\special{pa 3650 2650}%
\special{fp}%
%
\special{pn 13}%
\special{ar 4050 2800 250 200 3.1415927 4.1719695}%
%
\special{pn 13}%
\special{ar 3900 2800 400 400 3.1415927 4.7123890}%
%
\special{pn 13}%
\special{ar 3650 2800 150 150 4.7123890 6.2831853}%
\put(39.0000,-23.0000){\makebox(0,0){$+$}}%
\put(36.5000,-27.0000){\makebox(0,0){$-$}}%
\put(42.0000,-27.0000){\makebox(0,0){$-$}}%
\put(44.0000,-25.0000){\makebox(0,0){$+$}}%
\put(19.0000,-31.0000){\makebox(0,0){$D$}}%
\put(40.0000,-31.0000){\makebox(0,0){$G_D$}}%
%
\special{pn 13}%
\special{ar 1800 2600 200 200 6.2831853 3.1415927}%
%
\special{pn 13}%
\special{ar 1600 2600 400 400 5.3558901 6.2831853}%
%
\special{pn 13}%
\special{ar 2000 2600 400 400 3.1415927 4.7123890}%
%
\special{pn 13}%
\special{ar 1600 2600 400 400 4.7123890 5.1760366}%
%
\special{pn 13}%
\special{ar 2000 2600 400 400 4.7123890 1.5707963}%
%
\special{pn 13}%
\special{pa 1900 2250}%
\special{pa 2120 2250}%
\special{fp}%
%
\special{pn 13}%
\special{pa 2250 2250}%
\special{pa 2500 2250}%
\special{fp}%
\special{sh 1}%
\special{pa 2500 2250}%
\special{pa 2433 2230}%
\special{pa 2447 2250}%
\special{pa 2433 2270}%
\special{pa 2500 2250}%
\special{fp}%
%
\special{pn 13}%
\special{pa 1200 2200}%
\special{pa 1600 2200}%
\special{fp}%
%
\special{pn 13}%
\special{ar 2000 2600 600 400 1.5707963 3.1415927}%
%
\special{pn 13}%
\special{ar 1800 2600 400 350 3.1415927 4.4287068}%
%
\special{pn 13}%
\special{ar 4250 2800 250 250 5.4977871 6.2831853}%
%
\special{pn 13}%
\special{pa 4437 2634}%
\special{pa 4427 2623}%
\special{fp}%
\special{sh 1}%
\special{pa 4427 2623}%
\special{pa 4457 2686}%
\special{pa 4463 2662}%
\special{pa 4487 2659}%
\special{pa 4427 2623}%
\special{fp}%
%
\special{pn 13}%
\special{ar 4250 2800 250 250 3.1415927 5.4977871}%
\end{picture}}%
\caption{The long knot $g$; the triple point is of type $(\leftarrow,+,-,-)$ and $\pair{X}{G_D}=0$}
\label{fig:knot_2}
\end{figure}
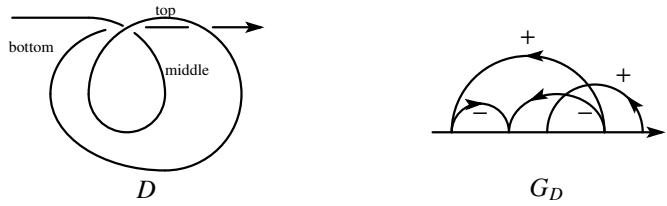
Clearly $g$ is isotopic to the unknot, and hence $c(g)=0$.
$G_D$ is as in Figure~\ref{fig:knot_2}.
In this case the triple point is of type $(\leftarrow,+,-,-)$,
and similar to the above,
we have
\begin{equation}\label{eq:knot_2}
 0=c(g)=\pair{X}{G_D}+I^{X'_1,+,-,-}_X+I^{(X'_2,l),+,-,-}_X=I^{X'_1,+,-,-}_X+I^{(X'_2,l),+,-,-}_X.
\end{equation}
By \eqref{eq:knot_1}, 
\eqref{eq:knot_2} and Lemma~\ref{lem:I_X'},
we have
\begin{equation}\label{eq:knot_3}
 I^{X'_1,+,+,+}_X=-I_X^{X'_1,+,-,-}=\frac{1}{2},
 \quad
 I^{(X'_2,l),+,+,+}_X=I_X^{(X'_2,l),+,-,-}=\frac{1}{2}.
\end{equation}
Lemma~\ref{lem:I_X'} and \eqref{eq:knot_3} show
\begin{equation}\label{eq:triple_X}
 I^{X'_1,s_1,s_2,s_3}_X=\frac{s_1s_2}{2},
 \quad
 I^{(X'_2,l),s_1,s_2,s_3}_X=I^{(X'_2,r),s_1,s_2,s_3}_X=\frac{s_2s_3}{2},
 \quad
 I^{X'_3,s_1,s_2,s_3}_X=\frac{s_3s_1}{2}.
\end{equation}

\subsection{Proof of \eqref{eq:generalized_Polyak_formula_for_Casson} and \eqref{eq:generalized_Polyak_formula_for_triple_linking}}
Let $f$ be a long knot with a diagram $D$.
We may suppose that the subarcs of $f$ near triple or more multiple crossings are parallel to $N^{\perp}$ like as $S_k$'s in \S\ref{ss:model_triple}.
By Proposition~\ref{prop:triple_contribution_Y},
we only need to compute $I_X(f)$ for sufficiently small $\epsilon>0$.

As we see in \S\ref{ss:double_contribution_X},
every pair $(\alpha,\beta)\in C^2_X(D)$ (Definition~\ref{def:pairing_for_knots}) contributes to $I_X$ by $\sgn(p)\sgn(q)$,
where $p,q$ are the double crossings of $D$ corresponding respectively to $\alpha$ and $\beta$.
Such contributions amount to $\pair{X}{G_D}$.

Figure~\ref{fig:triple_contribution_X} and \eqref{eq:triple_X} imply that every triple point of type $(\leftarrow,s_1,s_2,s_3)$ (resp.~$(\rightarrow,s_1,s_2,s_3)$) contributes to $I_X$ by $s_1s_2/2+s_2s_3/2$ (resp.~$s_3s_1/2+s_2s_3/2$).
Since $A_D$ never contains subdiagrams shown in Figure~\ref{fig:forbidden_diagrams},
the contributions of type $X'_1$ (resp.~$X'_3$) necessarily come from a triple point of type $(\leftarrow,s_1,s_2,s_3)$ (resp.~$(\rightarrow,s_1,s_2,s_3)$).
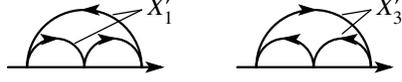
\begin{figure}
\centering
{\unitlength 0.1in%
\begin{picture}(20.0000,3.9400)(4.0000,-6.2100)%
%
\special{pn 13}%
\special{pa 400 600}%
\special{pa 1200 600}%
\special{fp}%
\special{sh 1}%
\special{pa 1200 600}%
\special{pa 1133 580}%
\special{pa 1147 600}%
\special{pa 1133 620}%
\special{pa 1200 600}%
\special{fp}%
%
\special{pn 13}%
\special{ar 650 600 150 150 4.7123890 6.2831853}%
%
\special{pn 13}%
\special{ar 950 600 150 150 3.1415927 4.7123890}%
%
\special{pn 13}%
\special{pa 935 451}%
\special{pa 950 450}%
\special{fp}%
\special{sh 1}%
\special{pa 950 450}%
\special{pa 882 434}%
\special{pa 897 454}%
\special{pa 885 474}%
\special{pa 950 450}%
\special{fp}%
%
\special{pn 13}%
\special{ar 800 600 300 300 4.7123890 6.2831853}%
%
\special{pn 13}%
\special{pa 815 300}%
\special{pa 800 300}%
\special{fp}%
\special{sh 1}%
\special{pa 800 300}%
\special{pa 867 320}%
\special{pa 853 300}%
\special{pa 867 280}%
\special{pa 800 300}%
\special{fp}%
%
\special{pn 13}%
\special{ar 800 600 300 300 3.1415927 4.7123890}%
%
\special{pn 13}%
\special{ar 650 600 150 150 3.1415927 4.7123890}%
%
\special{pn 13}%
\special{pa 635 451}%
\special{pa 650 450}%
\special{fp}%
\special{sh 1}%
\special{pa 650 450}%
\special{pa 582 434}%
\special{pa 597 454}%
\special{pa 585 474}%
\special{pa 650 450}%
\special{fp}%
%
\special{pn 13}%
\special{ar 950 600 150 150 4.7123890 6.2831853}%
%
\special{pn 13}%
\special{pa 1600 600}%
\special{pa 2400 600}%
\special{fp}%
\special{sh 1}%
\special{pa 2400 600}%
\special{pa 2333 580}%
\special{pa 2347 600}%
\special{pa 2333 620}%
\special{pa 2400 600}%
\special{fp}%
%
\special{pn 13}%
\special{ar 2000 600 300 300 3.1415927 4.7123890}%
%
\special{pn 13}%
\special{pa 1985 300}%
\special{pa 2000 300}%
\special{fp}%
\special{sh 1}%
\special{pa 2000 300}%
\special{pa 1933 280}%
\special{pa 1947 300}%
\special{pa 1933 320}%
\special{pa 2000 300}%
\special{fp}%
%
\special{pn 13}%
\special{ar 2000 600 300 300 4.7123890 6.2831853}%
%
\special{pn 13}%
\special{ar 1850 600 150 150 4.7123890 6.2831853}%
%
\special{pn 13}%
\special{pa 1865 451}%
\special{pa 1850 450}%
\special{fp}%
\special{sh 1}%
\special{pa 1850 450}%
\special{pa 1915 474}%
\special{pa 1903 454}%
\special{pa 1918 434}%
\special{pa 1850 450}%
\special{fp}%
%
\special{pn 13}%
\special{ar 2150 600 150 150 4.7123890 6.2831853}%
%
\special{pn 13}%
\special{pa 2165 451}%
\special{pa 2150 450}%
\special{fp}%
\special{sh 1}%
\special{pa 2150 450}%
\special{pa 2215 474}%
\special{pa 2203 454}%
\special{pa 2218 434}%
\special{pa 2150 450}%
\special{fp}%
%
\special{pn 13}%
\special{ar 1850 600 150 150 3.1415927 4.7123890}%
%
\special{pn 13}%
\special{ar 2150 600 150 150 3.1415927 4.7123890}%
\put(12.0000,-3.0000){\makebox(0,0){$X'_1$}}%
\put(24.0000,-3.0000){\makebox(0,0){$X'_3$}}%
%
\special{pn 8}%
\special{pa 1100 300}%
\special{pa 950 325}%
\special{fp}%
%
\special{pn 8}%
\special{pa 1100 300}%
\special{pa 750 475}%
\special{fp}%
%
\special{pn 8}%
\special{pa 2300 300}%
\special{pa 2150 325}%
\special{fp}%
%
\special{pn 8}%
\special{pa 2300 300}%
\special{pa 2150 425}%
\special{fp}%
\end{picture}}%
\caption{Subdiagrams that never appear in $A_D$}
\label{fig:forbidden_diagrams}
\end{figure}
Hence these contributions amount to
\begin{equation}
\begin{split}
 &\frac{1}{2}\sum_{s_1,s_2,s_3\in\{+,-\}}\sum_{\substack{\text{triple points of} \\ \text{type }(\leftarrow,s_1,s_2,s_3)}}(s_1s_2+s_2s_3)
 +\frac{1}{2}\sum_{s_1,s_2,s_3\in\{+,-\}}\sum_{\substack{\text{triple points of} \\ \text{type }(\rightarrow,s_1,s_2,s_3)}}(s_3s_1+s_2s_3)\\
 &=\frac{1}{2}\sum_{(\alpha,\beta)\in C^2_{X'_1}(D)}s_1s_2+\frac{1}{2}\sum_{(\alpha,\beta)\in C^2_{X'_2}(D)}s_2s_3+\frac{1}{2}\sum_{(\alpha,\beta)\in C^2_{X'_3}(D)}s_3s_1=\frac{1}{2}\left(\pair{X'_1}{G_D}+\pair{X'_2}{G_D}+\pair{X'_3}{G_D}\right).
\end{split}
\end{equation}
This completes the proof of \eqref{eq:generalized_Polyak_formula_for_Casson}.

For 3-component links,
we can deduce \eqref{eq:generalized_Polyak_formula_for_triple_linking} by the same argument,
just replacing the segments $S_k$ with those on the $k$-th component.

\bibliographystyle{amsplain}
\bibliography{biblio}

\def\cprime{$'$}
\providecommand{\bysame}{\leavevmode\hbox to3em{\hrulefill}\thinspace}
\providecommand{\MR}{\relax\ifhmode\unskip\space\fi MR }
\providecommand{\MRhref}[2]{%
  \href{http://www.ams.org/mathscinet-getitem?mr=#1}{#2}
}
\providecommand{\href}[2]{#2}
\begin{thebibliography}{10}

\bibitem{AltschulerFreidel97}
D.~Altschuler and L.~Freidel, \emph{Vassiliev knot invariants and
  {C}hern-{S}imons perturbation theory to all orders}, Comm.\ Math.\ Phys.
  \textbf{187} (1997), no.~2, 261--287.

\bibitem{AxelrodSinger94}
S.~Axelrod and I.~M. Singer, \emph{Chern-{S}imons perturbation theory {II}},
  J.\ Diff.\ Geom. \textbf{39} (1994), 173--213.

\bibitem{BottTaubes94}
R.~Bott and C.~Taubes, \emph{On the self-linking of knots}, J.\ Math.\ Phys.
  \textbf{35} (1994), no.~10, 5247--5287.

\bibitem{BrooksKomendarczyk24}
R.~Brooks and R.~Komendarczyk, \emph{From integrals to combinatorial formulas
  of finite type invariants - a case study}, J.\ Knot Theory Ramifications
  \textbf{33} (2024), no.~8.

\bibitem{CCL02}
A.~Cattaneo, P.~Cotta-Ramusino, and R.~Longoni, \emph{Configuration spaces and
  {V}assiliev classes in any dimensions}, Algebr.\ Geom.\ Topol. \textbf{2}
  (2002), 949--1000.

\bibitem{Kohno94}
T.~Kohno, \emph{Vassiliev invariants and de {R}ham complex on the space of
  knots}, Symplectic geometry and quantization (Sanda and Yokohama, 1993),
  Contemp.\ Math., vol. 179, pp.~123--138.

\bibitem{Koytcheff14}
R.~Koytcheff, \emph{The {M}ilnor triple linking number of string links by
  cut-and-paste topology}, Alg.\ Geom.\ Topol. \textbf{14} (2014), 1205--1247.

\bibitem{KoytcheffMunsonVolic13}
R.~Koytcheff, B.~Munson, and I.~Voli{\'c}, \emph{Configuration space integrals
  and the cohomology of the space of homotopy string links}, J.\ Knot Theory
  Ramifications \textbf{22} (2013), no.~11, 1350061, 73.

\bibitem{PolyakViro94}
M.~Polyak and O.~Viro, \emph{Gauss diagram formulas for {V}assiliev
  invariants}, Internat.\ Math.\ Res.\ Notices (1994), no.~11, 445--453.

\bibitem{PolyakViro01}
\bysame, \emph{On the {C}asson knot invariant}, J.\ Knot Theory Ramifications
  \textbf{10} (2001), no.~5, 711--738, Knots in Hellas '98, Vol.\ 3 (Delphi).

\bibitem{K14_2}
K.~Sakai, \emph{Lin-{W}ang type formula for the {H}aeflige invariant}, Homology
  Homotopy Appl. \textbf{17} (2015), no.~2, 317--341.

\bibitem{KWatanabe08}
K.~Sakai and T.~Watanabe, \emph{1-loop graphs and configuration space integral
  for embedding spaces}, Math.\ Proc.\ Cambridge Phil.\ Soc. \textbf{152}
  (2012), no.~3, 497--533.

\bibitem{Volic05}
I.~Voli{\'c}, \emph{A survey of {B}ott-{T}aubes integration}, J.\ Knot Theory
  Ramifications \textbf{16} (2007), no.~1, 1--42.

\bibitem{Watanabe07}
T.~Watanabe, \emph{Configuration space integral for long {$n$}-knots and the
  {A}lexander polynomial}, Algebr.\ Geom.\ Topol. \textbf{7} (2007), 47--92.

\end{thebibliography}
\end{document}